\newtheorem{thm}{Theorem}[section]
\newtheorem{lem}[thm]{Lemma}
\newtheorem{cor}[thm]{Corollary}
\newtheorem{prop}[thm]{Proposition}
\newtheorem{ex}[thm]{Example}
\newtheorem{qu}[thm]{Question}
\newtheorem*{prob*}{Open problem}
\theoremstyle{definition}
\newtheorem{defi}[thm]{Definition}
\theoremstyle{remark}
\newtheorem{rem}[thm]{Remark}
\newtheorem*{rem*}{Remark}
\newcommand{\kringel}{\mathbin{\raise1pt\hbox{$\scriptstyle\circ$}}}
\newcommand{\pkt}{\mathbin{\raise0pt\hbox{$\scriptstyle\bullet$}}}
\newcommand{\C}{\mathbb{C}}
\newcommand{\N}{\mathbb{N}}
\newcommand{\Z}{\mathbb{Z}}
\newcommand{\tr}{\mathop{\rm tr}}
\newcommand{\ad}{{\rm ad}}
\newcommand{\Ad}{\mathop{\rm Ad}}
\newcommand{\Ann}{{\rm Ann}}
\newcommand{\End}{{\rm End}}
\newcommand{\Der}{{\rm Der}}
\newcommand{\Lg}{\mathfrak{g}}
\newcommand{\Ll}{\mathfrak{l}}
\newcommand{\Ln}{\mathfrak{n}}
\newcommand{\Lr}{\mathfrak{r}}
\newcommand{\Ls}{\mathfrak{s}}
\newcommand{\abs}[1]{\lvert#1\rvert}
\newcommand{\al}{\alpha}
\newcommand{\be}{\beta}
\newcommand{\ga}{\gamma}
\newcommand{\de}{\delta}
\newcommand{\la}{\lambda}
\newcommand{\ra}{\rightarrow}
\renewcommand{\phi}{\varphi}
\begin{document}


\title[PA-structures]{Post-Lie algebra structures for nilpotent Lie algebras}

\author[D. Burde]{Dietrich Burde}
\author[C. Ender]{Christof Ender}
\author[W. Moens]{Wolfgang Alexander Moens}
\address{Fakult\"at f\"ur Mathematik\\
Universit\"at Wien\\
  Oskar-Morgenstern-Platz 1\\
  1090 Wien \\
  Austria}
\email{dietrich.burde@univie.ac.at}
\address{Fakult\"at f\"ur Mathematik\\
Universit\"at Wien\\
  Oskar-Morgenstern-Platz 1\\
  1090 Wien \\
  Austria}
\email{christof.ender@univie.ac.at}
\address{Fakult\"at f\"ur Mathematik\\
Universit\"at Wien\\
  Oskar-Morgenstern-Platz 1\\
  1090 Wien \\
  Austria}
\email{wolfgang.moens@univie.ac.at}

\date{\today}

\subjclass[2000]{Primary 17B30, 17D25}
\keywords{Post-Lie algebra, Pre-Lie algebra, LR-algebra, PA-structure, CPA-structure}

\begin{abstract}
We study post-Lie algebra structures on $(\Lg,\Ln)$ for nilpotent Lie algebras. First we show that if $\Lg$ is 
nilpotent such that $H^0(\Lg,\Ln)=0$, then also $\Ln$ must be nilpotent, of bounded class. For post-Lie algebra structures 
$x\cdot y$ on pairs of $2$-step nilpotent Lie algebras $(\Lg,\Ln)$ we give necessary and sufficient conditions such that 
$x\circ y=\frac{1}{2}(x\cdot y+y\cdot x)$ defines a CPA-structure on $\Lg$, or on $\Ln$. As a corollary we obtain 
that every LR-structure on a Heisenberg Lie algebra of dimension $n\ge 5$ is complete. Finally we classify all 
post-Lie algebra structures on $(\Lg,\Ln)$ for $\Lg\cong \Ln\cong \Ln_3$, where $\Ln_3$ is the $3$-dimensional 
Heisenberg Lie algebra.
 \end{abstract}

\maketitle

\section{Introduction}

Post-Lie algebras and post-Lie algebra structures arise in many areas of mathematics and physics. One particular
area is differential geometry and the study of geometric structures on Lie groups. Here post-Lie algebras arise 
as a natural common generalization of pre-Lie algebras \cite{HEL,KIM,SEG,BU5,BU19,BU24} and LR-algebras \cite{BU34, BU38}, 
in the context of nil-affine actions of Lie groups. \\
On the other hand, post-Lie algebras have been introduced by Vallette \cite{VAL} in connection with the homology of 
partition posets and the study of Koszul operads. They have been studied by several authors in various
contexts, e.g., for algebraic operad triples \cite{LOD}, in connection with modified Yang-Baxter equations, 
Rota-Baxter operators, universal enveloping algebras, double Lie algebras, $R$-matrices, isospectral flows, 
Lie-Butcher series and many other topics \cite{BAI, ELM, GUB}. \\
Our work on post-Lie algebras centers around the existence question of post-Lie algebra structures for given
pairs of Lie algebras, on algebraic structure results, and on the classification of post-Lie algebra structures. 
For a survey on the results and open questions see \cite{BU33,BU41,BU44}. A particular interesting class of post-Lie algebra 
structures is given by {\em commutative} structures, so-called {\em CPA-structures}. For the existence question of
CPA-structures on semisimple, perfect and complete Lie algebras, see \cite{BU51,BU52}. For nilpotent Lie algebras, these
questions are usually harder to answer. In \cite{BU57} we proved, among other things, that every CPA-structure on a 
nilpotent Lie algebra without abelian factor is {\em complete}, i.e., that all left multiplications $L(x)$ are nilpotent. 
It is a natural question to ask how this result extends to general post-Lie algebra structures on pairs of nilpotent
Lie algebras. In some cases we can associate a CPA-structure on $\Lg$ or on $\Ln$ to a given PA-structure on $(\Lg,\Ln)$,
and we can show the nilpotency of the left multiplications. \\[0.2cm]
The paper is structured as follows. In section $2$ we recall the basic notions of post-Lie algebra structures, or
{\it PA-structures}, and we introduce annihilators, which generalize the ones from the case of CPA-structures.
In particular, we consider the invariant $H^0(\Lg,\Ln)$ for the $\Lg$-module $\Ln$ with the action given by
a given PA-structure. In section $3$ we prove that, given a PA-structure $x\cdot y$ on $(\Lg,\Ln)$ where $\Lg$
is nilpotent and $H^0(\Lg,\Ln)=0$, that $\Ln$ must be nilpotent of class at most $\abs{X}^{2^{\abs{X}}}$. Here $X$
is a certain finite set arising from a group grading of $\Ln$. This improves a structure result 
from \cite{BU41}, where we had shown that $\Ln$ must be solvable, without the assumption on the invariants. The proof uses 
recent results on arithmetically-free group gradings of Lie algebras, given in \cite{MOE1, MOE2}. In section $4$ we associate
to any PA-structure on pairs  $(\Lg,\Ln)$ of two-step nilpotent Lie algebras a CPA-structure on $\Lg$ or on
$\Ln$, by the formula
\[
x\circ y=\frac{1}{2}(x\cdot y+y\cdot x).
\]
However, this does not work in general. It turns out that certain identities have to be satisfied. We
determine these identities. In some special cases this also implies that all left multiplications $L(x)$ 
of the PA-structure are nilpotent, because this is true for the associated CPA-structure. This is true in particular
for $\Lg$ abelian and $\Ln$ a Heisenberg Lie algebra of dimension $n\ge 5$. \\[0.2cm]
Finally, in section $5$, we classify all PA-structures $x\cdot y$ on pairs of $3$-dimensional Heisenberg Lie algebras.
The result is a long list, with rather complicated structures. They satisfy, however, very nice properties, which
we cannot prove without the classification. For example, all left multiplications $L(x)$ are nilpotent and 
$L([x,y])+R([x,y])=0$ for all $x,y\in V$. Furthermore, $x\circ y=\frac{1}{2}(x\cdot y+y\cdot x)$ defines
a CPA-structure on $\Lg$.

\section{Preliminaries}

Let $K$ denote a field of characteristic zero. We recall the definition of a post-Lie algebra structure 
on a pair of Lie algebras $(\Lg,\Ln)$ over $K$, see \cite{BU41}:

\begin{defi}\label{pls}
Let $\Lg=(V, [\, ,])$ and $\Ln=(V, \{\, ,\})$ be two Lie brackets on a vector space $V$ over 
$K$. A {\it post-Lie algebra structure}, or {\em PA-structure} on the pair $(\Lg,\Ln)$ is a 
$K$-bilinear product $x\cdot y$ satisfying the identities:
\begin{align}
x\cdot y -y\cdot x & = [x,y]-\{x,y\} \label{post1}\\
[x,y]\cdot z & = x\cdot (y\cdot z) -y\cdot (x\cdot z) \label{post2}\\
x\cdot \{y,z\} & = \{x\cdot y,z\}+\{y,x\cdot z\} \label{post3}
\end{align}
for all $x,y,z \in V$.
\end{defi}

Define by  $L(x)(y)=x\cdot y$ and $R(x)(y)=y\cdot x$ the left respectively right multiplication 
operators of the algebra $A=(V,\cdot)$. By \eqref{post3}, all $L(x)$ are derivations of the Lie 
algebra $(V,\{,\})$. Moreover, by \eqref{post2}, the left multiplication
\[
L\colon \Lg\ra \Der(\Ln)\subseteq \End (V),\; x\mapsto L(x)
\]
is a linear representation of $\Lg$. The right multiplication $R\colon V\ra V,\; x\mapsto R(x)$
is a linear map, but in general not a Lie algebra representation. \\
If $\Ln$ is abelian, then a post-Lie algebra structure on $(\Lg,\Ln)$ corresponds to
a {\it pre-Lie algebra structure} on $\Lg$. In other words, if $\{x,y\}=0$ for all $x,y\in V$, then 
the conditions reduce to
\begin{align*}
x\cdot y-y\cdot x & = [x,y], \\
[x,y]\cdot z & = x\cdot (y\cdot z)-y\cdot (x\cdot z),
\end{align*}
i.e., $x\cdot y$ is a {\it pre-Lie algebra structure} on the Lie algebra $\Lg$, see \cite{BU41}. 
If $\Lg$ is abelian, then the conditions reduce to
\begin{align*}
x\cdot y-y\cdot x & = -\{x,y\} \\
x\cdot (y\cdot z)& = y\cdot (x\cdot z), \\
x\cdot \{y,z\} & = \{x\cdot y,z\}+\{y,x\cdot z\},
\end{align*}
i.e., $-x\cdot y$ is an {\it LR-structure} on the Lie algebra $\Ln$, see \cite{BU41}. \\[0.2cm]
Another particular case of a post-Lie algebra structure arises 
if the algebra $A=(V,\cdot)$ is {\it commutative}, i.e., if $x\cdot y=y\cdot x$ is satisfied for all 
$x,y\in V$, so that we have $L(x)=R(x)$ for all $x\in V$. Then the two Lie brackets $[x,y]=\{x,y\}$ 
coincide, and we obtain a commutative algebra structure on $V$ associated with only one Lie 
algebra \cite{BU51}:

\begin{defi}\label{cpa}
A {\it commutative post-Lie algebra structure}, or {\em CPA-structure} on a Lie algebra $\Lg$ 
is a $K$-bilinear product $x\cdot y$ satisfying the identities:
\begin{align}
x\cdot y & =y\cdot x \label{com4}\\
[x,y]\cdot z & = x\cdot (y\cdot z) -y\cdot (x\cdot z)\label{com5} \\
x\cdot [y,z] & = [x\cdot y,z]+[y,x\cdot z] \label{com6}
\end{align}
for all $x,y,z \in V$. 
\end{defi}

In  \cite{BU52}, Definition $2.5$ we had introduced the notion of an annihilator in $A$ for a CPA-structure.
This can be generalized to PA-structures as follows.

\begin{defi}
Let $A=(V,\cdot)$ be a post-Lie algebra structure on a pair of Lie algebras $(\Lg,\Ln)$.
The left and right annihilators in $A$ are defined by
\begin{align*}
\Ann_L(A) & = \{x\in A\mid x\cdot A=0\},\\
\Ann_R(A) & = \{x\in A\mid A\cdot x=0\}.
\end{align*}
\end{defi}

Both spaces are in general neither left nor right ideals of $A$, unlike in the case of
CPA-structures. So we view them usually just as vector subspaces of $V$. However, the next lemma
shows that the annihilators satisfy some other properties. Recall that $\Ln$ is a $\Lg$-module via 
the product $x\cdot y$ for $x\in \Lg$ and $y\in \Ln$. The zeroth Lie algebra
cohomology is given by
\[
H^0(\Lg,\Ln)=\{y\in \Ln \mid x\cdot y=0 \;\forall\, x\in \Lg\}.
\] 

\begin{lem}
The annihilators in $A$ equal the kernels of $L$ respectively $R$, i.e.,
\begin{align*}
\Ann_L(A) & =\ker(L)= \{x\in A\mid L(x)=0\},\\
\Ann_R(A) & =\ker(R)= \{x\in A\mid R(x)=0\}.
\end{align*}
The subspace $\Ann_L(A)$ is a Lie ideal of $\Lg$, and the subspace $\Ann_R(A)$ coincides with
$H^0(\Lg,\Ln)$.
\end{lem}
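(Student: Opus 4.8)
The plan is to unwind the three assertions from the definitions. First I would establish the two identities $\Ann_L(A)=\ker(L)$ and $\Ann_R(A)=\ker(R)$. These are essentially tautological: $x\in\Ann_L(A)$ means $x\cdot y=0$ for all $y\in V$, which is literally the statement that the operator $L(x)\colon y\mapsto x\cdot y$ is the zero map; and dually $x\in\Ann_R(A)$ means $y\cdot x=0$ for all $y\in V$, i.e.\ $R(x)=0$. So this step is a one-line observation and needs no real work.

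Next I would show that $\Ann_L(A)=\ker(L)$ is a Lie ideal of $\Lg$. The key tool is that $L\colon\Lg\to\Der(\Ln)$ is a Lie algebra homomorphism, which is exactly identity \eqref{post2} rephrased: $L([x,y])=L(x)L(y)-L(y)L(x)=[L(x),L(y)]$. The kernel of any Lie algebra homomorphism is a Lie ideal, so if $x\in\ker(L)$ and $y\in\Lg$ is arbitrary, then $L([y,x])=[L(y),L(x)]=0$, whence $[y,x]\in\ker(L)$. Thus $\Ann_L(A)=\ker(L)$ is an ideal of $\Lg$.

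Finally I would identify $\Ann_R(A)$ with $H^0(\Lg,\Ln)$. By definition $H^0(\Lg,\Ln)=\{y\in\Ln\mid x\cdot y=0\ \forall x\in\Lg\}$, i.e.\ the set of $y$ killed by every left multiplication. On the other hand $\Ann_R(A)=\{y\mid z\cdot y=0\ \forall z\in V\}$, which by the same reading is also $\{y\mid x\cdot y=0\ \forall x\in V\}$ — but $V$ is the common underlying space of $\Lg$ and $\Ln$, so "for all $x\in\Lg$" and "for all $x\in V$" describe the same condition. Hence the two sets coincide verbatim; there is nothing to prove beyond matching the definitions. I do not anticipate any genuine obstacle here — the only thing to be careful about is keeping the left/right bookkeeping straight, since $R(x)$ denotes the operator $y\mapsto y\cdot x$ (right multiplication by $x$), so its kernel is governed by the left argument ranging over all of $V$, which is why $\Ann_R(A)$, not $\Ann_L(A)$, is the cohomology group.
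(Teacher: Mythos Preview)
Your proposal is correct and follows essentially the same approach as the paper's proof, which simply observes that the equalities are obvious and invokes the fact that $L\colon\Lg\to\Der(\Ln)$ is a Lie algebra representation, so its kernel is an ideal. You have merely spelled out in more detail what the paper compresses into two sentences.
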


\begin{proof}
The equalities are obvious. Since $L\colon \Lg\ra \Der(\Ln)$ is a Lie algebra representation,
$\ker(L)$ is a Lie ideal of $\Lg$.
\end{proof}

Suppose that $V$ is $2$-dimensional, with $\Lg$ abelian and $\Ln$ non-abelian. Then there is
a basis $(e_1,e_2)$ of $V$ such that $[e_1,e_2]=0$ and $\{e_1,e_2\}=e_1$. We have classified all
PA-structures on $(\Lg,\Ln)$ in \cite{BU41}, section $3$.

\begin{ex}\label{2.5}
Every PA-structure on $(\Lg,\Ln)\cong (K^2,\Lr_2(K))$ in the above basis is of the form
\begin{align*}
e_1\cdot e_1 & = \al e_1,\quad e_2\cdot e_1=(\be+1)e_1,\\
e_1\cdot e_2 & = \be e_1,\quad e_2\cdot e_2=\ga e_1,
\end{align*}
for $\al,\be,\ga \in K$ satisfying the condition $\be(\be-1)-\al\ga=0$.
For all these PA-structures we have 
\[
\dim \Ann_L(A)=\dim \Ann_R(A)=\dim H^0(\Lg,\Ln)=1.
\]
\end{ex}

More precisely we have
\[
\Ann_L(A)=\begin{cases} \langle \ga e_1-\be e_2\rangle, \text{ if } (\be,\ga)\neq (0,0),\\
\langle e_1-\al e_2\rangle, \hspace{0.25cm} \text{ if } \be=\ga=0,
\end{cases}
\]

\[
\Ann_R(A)=\begin{cases} \langle \be e_1-\al e_2\rangle, \text{ if } (\al,\be)\neq (0,0),\\
\langle \ga e_1- e_2\rangle, \hspace{0.25cm} \text{ if } \al=\be=0.
\end{cases}
\]

\section{Nilpotency of $\Lg$ and $\Ln$}

We have proved in \cite{BU41}, Proposition $4.3$ the following structure result for post-Lie algebra
structures on $(\Lg,\Ln)$.

\begin{prop}\label{3.1}
Suppose that there exists a post-Lie algebra structure on $(\Lg,\Ln)$, where $\Lg$ is
nilpotent. Then $\Ln$ is solvable.
\end{prop}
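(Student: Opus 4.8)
The plan is to reduce the statement to a grading argument for $\Ln$ and then to isolate the genuinely self-referential part as the core difficulty. Since solvability of a Lie algebra is unaffected by extension of the base field, I first pass to the algebraic closure and assume $K$ algebraically closed. The only structural input I use at the outset is that, by \eqref{post2} and \eqref{post3}, the map $L\colon\Lg\ra\Der(\Ln)$ is a representation of the \emph{nilpotent} Lie algebra $\Lg$ whose image consists of derivations of $\Ln$.

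First I would form the generalized weight space decomposition of $V$ for the nilpotent Lie algebra $L(\Lg)$, writing $V=\bigoplus_{\la\in\Lambda}V_{\la}$, where $\Lambda\subseteq\Lg^{*}$ is a finite set of weights, each vanishing on $[\Lg,\Lg]$ because $\Lg$ is nilpotent. The essential point is that each $L(x)$ is a derivation of $\Ln$, so this decomposition is a \emph{grading} of $\Ln$: one checks $\{V_{\la},V_{\mu}\}\subseteq V_{\la+\mu}$, with the convention $V_{\la+\mu}=0$ if $\la+\mu\notin\Lambda$. The support $\Lambda$ generates a finitely generated torsion-free abelian group $\Ga\cong\Z^{r}$ inside $\Lg^{*}$, so $\Ln$ becomes $\Ga$-graded with finite support. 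I then use the standard criterion (via Lie's and Engel's theorems over the algebraically closed $K$) that $\Ln$ is solvable if and only if $\ad_{\Ln}(w)$ is a nilpotent operator on $V$ for every $w\in[\Ln,\Ln]$. Since $[\Ln,\Ln]$ is a graded subspace, it suffices to treat homogeneous $w\in V_{\nu}\cap[\Ln,\Ln]$. For $\nu\neq 0$ this is immediate: $\ad_{\Ln}(w)$ raises the $\Ga$-degree by $\nu$, and because $\Lambda$ is finite and $\Ga$ is torsion-free, a high power of $\ad_{\Ln}(w)$ maps every $V_{\mu}$ outside the support and hence vanishes, so $\ad_{\Ln}(w)$ is nilpotent.

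The remaining, and genuinely hard, case is the zero-degree part $w\in V_{0}\cap[\Ln,\Ln]$, where $\ad_{\Ln}(w)$ preserves the grading and the degree argument gives nothing; this is exactly the place where an $\Ls\Ll_2$-type summand would have to be excluded. Here I would first observe that $V_{0}$ carries a sub-PA-structure: generalized weight spaces of a nilpotent Lie algebra are submodules, so each $L(x)$ preserves $V_{0}$, whence $V_{0}\cdot V_{0}\subseteq V_{0}$ and, by \eqref{post1}, also $[V_{0},V_{0}]\subseteq V_{0}$ and $\{V_{0},V_{0}\}\subseteq V_{0}$, with $(V_{0},[\,,])$ still nilpotent. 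An induction on $\dim V$ then makes $(V_{0},\{\,,\})$ solvable and disposes of the contributions coming from $\{V_{0},V_{0}\}$.

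The main obstacle is to control the \emph{cross terms} $\{V_{\la},V_{-\la}\}$ with $\la\neq 0$, which also land in $V_{0}$. For $a\in V_{\la}$ and $b\in V_{-\la}$ I expect to show that $\ad_{\Ln}(\{a,b\})$ has only zero eigenvalues. One can check that $\ad_{\Ln}(v)$ for $v\in V_{0}$ commutes with the semisimple torus $\{L(x)_{s}\}$, since $L(x)_{s}$ is again a derivation of $\Ln$ and acts by the scalar $0$ on $V_{0}$; hence the semisimple part $\ad_{\Ln}(v)_{s}$ is a semisimple derivation commuting with that torus. The remaining work is to feed the identity $L(x)-R(x)=\ad_{\Lg}(x)-\ad_{\Ln}(x)$ from \eqref{post1}, together with the nilpotency of every $\ad_{\Lg}(x)$, into this picture so as to force $\ad_{\Ln}(v)_{s}=0$ for $v\in V_{0}\cap[\Ln,\Ln]$. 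Making this last step precise — ruling out a semisimple contribution from the cross terms purely from the post-Lie axioms and the nilpotency of $\Lg$ — is where the real difficulty lies, and it is exactly what separates this argument from a formal statement about arbitrary finite gradings.
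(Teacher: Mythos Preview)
The paper does not prove Proposition~\ref{3.1}; it is quoted from \cite{BU41}. Your weight-space decomposition is exactly the device the paper then uses for the \emph{stronger} theorem that follows, where the extra hypothesis $H^{0}(\Lg,\Ln)=0$ removes $0$ from the support and makes the grading arithmetically-free. So the obstruction you isolate at $V_{0}$ is precisely the gap between solvability and nilpotency that the added hypothesis is designed to close --- the paper does not attempt to handle $V_0$ by grading methods either.

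As a proof of solvability, your argument has real gaps beyond the one you acknowledge. The induction on $\dim V$ has no base: if every $L(x)$ is already nilpotent then $V_{0}=V$ and nothing is reduced, yet this case (all left multiplications nilpotent) is exactly the heart of the matter. Even when $V_{0}\subsetneq V$, the inductive conclusion that $(V_{0},\{\,,\})$ is solvable only controls $\Ad(w)|_{V_{0}}$ for $w$ in the derived subalgebra of $V_{0}$; it says nothing about $\Ad(w)|_{V_{\mu}}$ for $\mu\neq 0$, nor about the cross contributions $\{V_{\la},V_{-\la}\}$. The sketch via $\ad_{\Ln}(v)_{s}$ and \eqref{post1} restates the difficulty without a mechanism to resolve it. What you are missing is a second handle on $\Ad$: from \eqref{op2}, \eqref{op3} and \eqref{post1} one checks that $p(x):=L(x)+\Ad(x)$ is \emph{also} a Lie algebra representation of $\Lg$ on $V$; equivalently, $x\mapsto(x,L(x))$ embeds $\Lg$ as a nilpotent subalgebra of $\Ln\rtimes\Der(\Ln)$ complementary to $\Der(\Ln)$. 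This embedding is the tool behind the argument in \cite{BU41}: passing to $\Ln/\rad(\Ln)$ one confronts a nilpotent complement inside a semisimple algebra, and Levi--Lie considerations force $\Ln/\rad(\Ln)=0$. Without this second representation the weight-$0$ piece cannot be dispatched by grading bookkeeping alone.
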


In this section we will prove a stronger version of this proposition by applying recent results
on arithmetically-free group-gradings of Lie algebras from \cite{MOE1,MOE2}. A grading of a Lie algebra $\Ln$
by a group $(G,\circ)$ is a decomposition
\[
\Ln=\bigoplus_{g\in G} \Ln_g
\]
into homogeneous subspaces, such that for all $g,h\in G$, we have $[\Ln_g,\Ln_h]\subseteq \Ln_{g\circ h}$.
The set $X:=\{g\in G\mid \Ln_g\neq 0\}$ is called the {\it support} of the grading.
For an abelian group $(G,+)$ such a subset $X$ of $G$ is called {\it arithmetically-free}, if and only
if $X$ is finite and
\[
\{x+ky\mid k\in \N\cup \{0\}\}\subseteq X \text{ implies } y\not\in X.
\]
In general, a subset $X$ of an arbitrary group $G$ is called arithmetically-free,  if and only
if $X$ is finite and every subset of $X$ of pairwise commuting elements is arithmetically free.
The result which we want to apply is Theorem $3.14$ of \cite{MOE1} and Theorem $3.7$ of \cite{MOE2}. It it the
following result:

\begin{thm}\label{3.2}
Let $\Ln$ be a Lie algebra over a field $K$ which is graded by a group $G$. If the support $X$ of
the grading is arithmetically-free, then $\Ln$ is nilpotent of $\abs{X}$-bounded class. If $G$ is in addition
free-abelian, the bound can be given by $\abs{X}^{2^{\abs{X}}}$.
\end{thm}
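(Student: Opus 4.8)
This is Theorem~$3.14$ of \cite{MOE1} combined with Theorem~$3.7$ of \cite{MOE2}; here is an outline of the route one takes. I would first replace $G$ by the subgroup generated by the support, which changes neither $X$ nor its arithmetic-freeness, so that $G=\langle X\rangle$ is finitely generated and $\Ln$ is generated by the finitely many subspaces $\Ln_x$, $x\in X$. By the Jacobi identity every iterated bracket of homogeneous elements is a linear combination of left-normed ones of the same length, and a left-normed bracket $[[\cdots[a_1,a_2],a_3]\cdots,a_n]$ with $a_i\in\Ln_{g_i}$ lies in $\Ln_{g_1\circ\cdots\circ g_n}$; hence it can only be nonzero when $g_1,\dots,g_n\in X$ and every partial product $g_1\circ\cdots\circ g_k$ again lies in $X$. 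This reduces the nilpotency of $\Ln$ to bounding the length of such ``$X$-chains'', once the cancellations coming from Jacobi are taken into account.

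For an $X$-chain whose terms generate an abelian subgroup one argues inside that subgroup, and this is exactly where arithmetic-freeness enters: it forbids the chain from settling into a fixed translation pattern. The cleanest case is when $0$ does not lie in the convex hull of $X$, which, when $G$ is free abelian (where arithmetic-freeness just amounts to $0\notin X$), one can always arrange after applying a generic linear functional: two coinciding partial sums in the chain would force their difference to vanish and hence put $0$ into the convex hull of $X$, a contradiction; so all partial sums are distinct, every $X$-chain has length $\le\abs{X}$, and $\Ln$ is nilpotent of class $\le\abs{X}$. If $0$ does lie in the convex hull of $X$, then arbitrarily long \emph{abstract} $X$-chains exist --- for $X=\{\pm1,\pm2\}\subseteq\Z$ the degrees may oscillate $1,2,1,2,\dots$ --- so the bare chain-length estimate is useless and the Jacobi identity must be invoked; for this $X$, for instance, the corresponding brackets collapse because $\Ln_0=0$. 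To make this quantitative I would run a recursion on the subsets of $X$: splitting $X$ according to the sign of a generic functional and recursing on each half yields a branching of depth $\le\abs{X}$, hence at most $2^{\abs{X}}$ leaves, each contributing a class bound $\le\abs{X}$, and multiplying these gives $\abs{X}^{2^{\abs{X}}}$ in the free-abelian case.

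To pass from a single abelian subgroup to an arbitrary group $G$ one uses that nilpotency of $\Ln$ is detected by the ad-nilpotency of homogeneous elements, whose orbits under the adjoint action run through cosets of cyclic subgroups --- this is precisely why the general notion of arithmetic-freeness only constrains pairwise-commuting subsets of $X$. The two steps I expect to be the real obstacles are: controlling quantitatively how much the Jacobi cancellations save in the non-convex abelian case, so that the recursion closes with about $2^{\abs{X}}$ levels and the bound does not deteriorate with the rank of $G$; and making the reduction from $G$ to its commuting subsets rigorous, which presumably proceeds through the (weaker, and already available) solvability of $\Ln$ together with the structure theory of solvable Lie algebras in characteristic zero.
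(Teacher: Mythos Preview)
The paper does not prove this theorem at all: it is quoted verbatim as ``Theorem~$3.14$ of \cite{MOE1} and Theorem~$3.7$ of \cite{MOE2}'' and used as a black box in the proof of the subsequent result. Your opening sentence already matches the paper exactly, so as far as comparison with the paper goes, you are done.

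Everything after your first sentence is surplus relative to the paper, and since you offer it as an outline of the arguments in \cite{MOE1,MOE2}, a brief comment on its accuracy: the reduction to $X$-chains of partial products staying in $X$ is correct, and you are right that in the free-abelian case arithmetic-freeness is equivalent to $X$ finite with $0\notin X$. However, the sentence ``two coinciding partial sums \dots\ put $0$ into the convex hull of $X$'' conflates a vanishing \emph{sum} of elements of $X$ with $0$ lying in the \emph{convex hull}; what you actually use (and what works) is that if a separating functional makes every element of $X$ strictly positive, then partial sums are strictly increasing and hence pairwise distinct, so an $X$-chain has length at most $\abs{X}$. The recursive splitting you describe to reach $\abs{X}^{2^{\abs{X}}}$ is in the right spirit but is considerably more delicate in \cite{MOE2} than ``split by sign and multiply''; and the passage from abelian to arbitrary $G$ in \cite{MOE1} does not go through ad-nilpotency of homogeneous elements as you suggest, but rather through a reduction to finitely generated free-abelian gradings via solvability and a careful analysis of commuting families. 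As a heuristic summary your sketch is reasonable, but it should not be mistaken for the actual proof.
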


What additional conditions do we need in Proposition $\ref{3.1}$, in order to conclude that $\Ln$ is
nilpotent? Certainly $\Ln$ need not be nilpotent in general, as we have seen in Example $\ref{2.5}$.
There are PA-structures on  $(\Lg,\Ln)$ for  $\Lg$ abelian and $\Lg$ solvable, but non-nilpotent.
In all these cases the space $H^0(\Lg,\Ln)$ is non-trivial. In fact, the classification of PA-structures 
in dimension $2$, given in \cite{BU41}, shows that $\Ln$ is nilpotent in all cases where $\Lg$ is nilpotent 
{\em and} $H^0(\Lg,\Ln)=0$. It turns out that this is true in general.

\begin{thm}
Suppose that there exists a post-Lie algebra structure on $(\Lg,\Ln)$, where $\Lg$ is
nilpotent and $H^0(\Lg,\Ln)=0$. Then $\Ln$ is nilpotent of class at most $\abs{X}^{2^{\abs{X}}}$.
\end{thm}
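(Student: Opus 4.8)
The plan is to reduce the statement to Theorem \ref{3.2} by constructing a group grading of $\Ln$ whose support is arithmetically-free. The starting point is the left multiplication $L\colon\Lg\ra\Der(\Ln)$, which by \eqref{post2} is a representation of the nilpotent Lie algebra $\Lg$ on $V$; moreover each $L(x)$ acts as a derivation of $\Ln$ by \eqref{post3}. First I would pass, after extending scalars to $\overline{K}$ (harmless, since nilpotency class is unchanged by base field extension), to the weight space / generalized eigenspace decomposition of $V$ with respect to the action of $\Lg$. Because $\Lg$ is nilpotent, Lie's theorem in its nilpotent form (or the theory of weights for nilpotent Lie algebras) gives a decomposition $V=\bigoplus_{\la\in\Lg^*}V_\la$ into generalized weight spaces, and since each $L(x)$ is a derivation of $\{\,,\}$, one checks the standard fact that $\{V_\la,V_\mu\}\subseteq V_{\la+\mu}$. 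Thus the set $\Phi$ of weights occurring — a finite subset of the additive group $(\Lg^*,+)$, which is free-abelian after choosing a basis of the $\Q$-span — supports a group grading of $\Ln$.

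The next step is to show that this support is arithmetically-free, i.e.\ finite (clear) and satisfying: if $\la,\mu\in\Phi$ with $\la+k\mu\in\Phi$ for all $k\in\N\cup\{0\}$, then $\mu\notin\Phi$. Finiteness of $\Phi$ already forces $\mu=0$ in such a chain (otherwise the $\la+k\mu$ are distinct), so the real content is: $0\notin\Phi$, that is, $V_0=0$. Here is where the hypothesis $H^0(\Lg,\Ln)=0$ enters, and I expect this to be the main obstacle. The weight-zero space $V_0$ is the generalized-eigenspace on which $\Lg$ acts nilpotently; it is a Lie subalgebra of $\Ln$ (being $V_0$ for the grading, $\{V_0,V_0\}\subseteq V_0$), and it is $\Lg$-stable. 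One needs to argue $V_0\neq 0$ would contradict $H^0(\Lg,\Ln)=0$. The cleanest route: $\Lg$ acts on the nonzero space $V_0$ by nilpotent operators (each $L(x)|_{V_0}$ is nilpotent since $0$ is the only weight there, and $L$ is a representation of a nilpotent algebra acting nilpotently, so by Engel's theorem the image $L(\Lg)|_{V_0}$ is a nilpotent Lie algebra of nilpotent operators); hence by Engel's theorem there is a common nonzero fixed vector $y\in V_0$ with $L(x)y=0$ for all $x\in\Lg$, i.e.\ $0\neq y\in H^0(\Lg,\Ln)$, a contradiction. Therefore $V_0=0$, $0\notin\Phi$, and $\Phi$ is arithmetically-free.

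Finally, having a grading of $\Ln$ by the free-abelian group generated by $\Phi$ with arithmetically-free support, Theorem \ref{3.2} applies directly and yields that $\Ln$ is nilpotent of class at most $\abs{X}^{2^{\abs{X}}}$ with $X=\Phi$. It remains only to note that the nilpotency class is insensitive to the field extension $K\subseteq\overline{K}$, so the bound descends to $\Ln$ over $K$. The delicate points to get right in the write-up are: verifying $\{V_\la,V_\mu\}\subseteq V_{\la+\mu}$ from the derivation property \eqref{post3} (a short induction on the filtration defining the generalized weight spaces), and the Engel-type argument identifying a nonzero element of $H^0(\Lg,\Ln)$ inside any nonzero $\Lg$-stable subspace on which $\Lg$ acts nilpotently. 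The passage to $\overline{K}$ and the structure theory of weights for representations of nilpotent Lie algebras are standard and can be cited.
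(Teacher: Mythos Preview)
Your proposal is correct and follows essentially the same route as the paper: build the weight-space decomposition of $\Ln$ for the $\Lg$-action given by $L$, observe it is a grading by the additive group generated by the weights, use $H^0(\Lg,\Ln)=0$ to exclude the zero weight so the support is arithmetically-free, and invoke Theorem~\ref{3.2}. You are in fact slightly more careful than the paper on two technical points---the passage to $\overline{K}$ before taking weight spaces, and the Engel argument showing $V_0\neq 0$ would produce a nonzero invariant---which the paper leaves implicit or defers to the reference for the weight decomposition.
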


\begin{proof}
Since $\Lg$ is nilpotent there is a weight space decomposition for the $\Lg$-module $\Ln$, 
see \cite{BU57}, section $2$. It is given by
\[
\Ln=\bigoplus_{\al\in \Lg^{\ast}}\Ln_{\al},
\]
satisfying $[\Ln_{\al},\Ln_{\be}]\subseteq \Ln_{\al+\be}$ for all  $\al,\be\in \Lg^*$.
For a weight $\al$ we have $\Ln_{\al}\neq 0$, and there are only finitely many weights. 
Hence the support $X$ is finite. The grading group $G=(\Lg^{\ast},+)$ is free-abelian, so that we can 
also write
\[
\Ln=\bigoplus_{\al \in (\Z^n,+)}\Ln_{\al}.
\]
Because of $H^0(\Lg,\Ln)=0$ we know that $0$ is not a weight. Hence the support $X$ is
arithmetically-free and we can apply Theorem $\ref{3.2}$. Hence $\Ln$ is nilpotent of class at most 
$\abs{X}^{2^{\abs{X}}}$. 
\end{proof}

For PA-structures on $(\Lg,\Ln)$ where both $\Lg$ and $\Ln$ are nilpotent and indecomposable, we often see that
all left multiplication operators $L(x)$ are nilpotent. We have recently proved this in the special case
of CPA-structures, i.e., where $\Lg=\Ln$, see \cite{BU57}:

\begin{thm}
Let $x\cdot y$ be a CPA-structure on $\Lg$, where $\Lg$ is nilpotent with $Z(\Lg)\subseteq [\Lg,\Lg]$.
Then all left multiplications $L(x)$ are nilpotent.
\end{thm}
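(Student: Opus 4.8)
The statement to be proved is that for a CPA-structure $x\cdot y$ on a nilpotent Lie algebra $\Lg$ satisfying $Z(\Lg)\subseteq[\Lg,\Lg]$, all left multiplications $L(x)$ are nilpotent. The plan is to reduce the problem to a statement about a single nilpotent operator and then exploit the interplay between the associative-like identity \eqref{com5}, which says $L\colon\Lg\to\End(V)$ is a Lie algebra representation with $L([x,y])=[L(x),L(y)]$, and the derivation identity \eqref{com6}, which says each $L(x)$ is a derivation of $\Lg$. First I would record the two module-theoretic consequences: $\Lg$ acts on itself via $L$, and since $\Lg$ is nilpotent there is a weight space decomposition $\Lg=\bigoplus_{\al\in\Lg^*}\Lg_\al$ for this action, with $[\Lg_\al,\Lg_\be]\subseteq\Lg_{\al+\be}$ by the derivation property (exactly as in the proof of the previous theorem). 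An operator $L(x)$ is nilpotent for every $x$ precisely when $0$ is the only weight, so the goal becomes: show that no nonzero weight $\al$ occurs.

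Next I would argue by contradiction using the commutativity \eqref{com4}, which is the feature distinguishing this from the general PA-case: commutativity forces $L(x)(y)=L(y)(x)$, so the weight spaces interact very rigidly. Concretely, if $y\in\Lg_\al$ with $\al\neq 0$, then picking $x$ with $\al(x)\neq 0$ gives $L(x)y=\al(x)y+(\text{higher})$, but also $L(x)y=L(y)x$, which lands in the image of $L(y)$. Iterating, one shows the $\al$-generalized weight space is contained in $\sum_x \im L(y)$ for suitable $y$, and combining this with the grading relation $\Lg_\al\cdot\Lg_\be\subseteq\Lg_{\al+\be}$ one can pin down where the top weight space must sit. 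The key structural input is $Z(\Lg)\subseteq[\Lg,\Lg]$: I would show that the sum of the nonzero-weight spaces, or an appropriate piece of it, is a central ideal not contained in $[\Lg,\Lg]$, contradicting the hypothesis. The mechanism is that a highest weight vector $y\in\Lg_\al$ (maximal $\al$ under a suitable ordering) satisfies $L(y)\Lg\subseteq\bigoplus_{\be}\Lg_{\al+\be}$, which for $\al$ maximal collapses, making $y$ lie in $\Ann_L(A)$; commutativity then makes $\langle y\rangle$ a genuine ideal, and a further weight argument shows it is central while avoiding $[\Lg,\Lg]$.

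The main obstacle, as I see it, is controlling the non-semisimple part of the $L(x)$: the weight space decomposition is only a generalized-eigenspace decomposition, so the "higher order" terms in $L(x)y=\al(x)y+\cdots$ genuinely complicate the bookkeeping, and one cannot simply work with honest eigenvectors. I expect the right way around this is to fix a generic $x_0\in\Lg$, put the rational canonical / Jordan form of $L(x_0)$ to use, and run an induction on $\dim\Lg$ (or on nilpotency class): pass to $\Lg/Z(\Lg)$ or to a maximal ideal, check that the hypotheses are inherited, and lift the conclusion back. The subtlety in the induction is that $Z(\Lg)\subseteq[\Lg,\Lg]$ need not pass cleanly to quotients, so I would instead quotient by a one-dimensional central ideal $\langle z\rangle$ with $z\in[\Lg,\Lg]$ chosen so that $L(z)=0$ — which is available because $[\Lg,\Lg]$ acts by $L([u,v])=[L(u),L(v)]$ and nilpotency gives central elements killed by the representation — and then the obstruction becomes showing that nilpotency of the $L(x)$ on $\Lg/\langle z\rangle$ plus $L(z)=0$ forces nilpotency on $\Lg$, which follows since $L(x)$ then has $\langle z\rangle$ as an invariant subspace on which it acts as zero.
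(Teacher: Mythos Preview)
The present paper does not prove this theorem: it is quoted from \cite{BU57} (Theorem~3.6 there) and used as a known input elsewhere. So there is nothing here to compare against, and your sketch must stand on its own.

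Your starting framework --- the weight decomposition of the nilpotent $\Lg$-module $\Lg$ via $L$, the grading $[\Lg_\al,\Lg_\be]\subseteq\Lg_{\al+\be}$ coming from the derivation axiom, and the goal of ruling out nonzero weights --- is correct and is indeed what \cite{BU57} uses. But both routes you propose to finish have real gaps. In the ``highest weight'' route, the key assertion $\Lg_\al\cdot\Lg_\be\subseteq\Lg_{\al+\be}$ is stated without argument and is not immediate: the identity $L(x)(y\cdot z)=y\cdot L(x)z+[x,y]\cdot z$ involves the Lie bracket $[x,y]$, not $L(x)y$, so it is not a Leibniz rule for the product $\cdot$. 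Even granting this grading, you give no reason why a maximal-weight vector should lie in $Z(\Lg)\setminus[\Lg,\Lg]$, since $[\Lg,\Lg]$ is itself graded by the same weights and its $\al$-component has no reason to vanish.

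The induction route has a more decisive problem. You correctly locate $z\in Z(\Lg)\cap[\Lg,\Lg]$ with $L(z)=0$, and by commutativity $\langle z\rangle$ is then a two-sided ideal for $\cdot$, so the quotient carries a CPA-structure. But the stem hypothesis $Z\subseteq[\,,\,]$ is \emph{not} inherited by $\Lg/\langle z\rangle$: already for $\Lg=\Ln_3(K)$ the only candidate is $z=e_3$ and the quotient is abelian, so the induction hypothesis is simply unavailable there. Your final sentence explains why $\langle z\rangle$ is $L(x)$-invariant, which is a different (and easy) issue; it does nothing to restore the stem condition on the quotient, and the induction does not close.
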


We have called a Lie algebra $\Lg$ with $Z(\Lg)\subseteq [\Lg,\Lg]$ a {\it stem Lie algebra}.
It seems that this result has a natural generalization to PA-structures on pairs of nilpotent
Lie algebras. So we pose the following question.

\begin{qu}
Let $x\cdot y$ be a PA-structure on $(\Lg,\Ln)$ where both $\Lg$ and $\Ln$ are nilpotent stem Lie
algebras. Is it true that all left multiplications $L(x)$ are nilpotent?
\end{qu}

Examples of PA-structures in low dimensions show that there are counterexamples with $\Lg$ or $\Ln$ not nilpotent.
For the following example, let $\Lg$ be the $3$-dimensional solvable non-nilpotent Lie algebra $\Lr_{3,\la}(K)$ 
with basis $\{e_1,e_2,e_3\}$ and $[e_1,e_2]=e_2$, $[e_1,e_3]=\la e_3$ for $\la\in K^{\times}$, and 
$\Ln$ be the Heisenberg Lie algebra $\Ln_3(K)$ with $\{e_1,e_2\}=e_3$. 

\begin{ex}
There is a PA-structure on $(\Lg,\Ln)$ given by
\begin{align*}
e_1\cdot e_1 & = (\la-1)e_1+\al e_2+\be e_3,\; e_1\cdot e_2 = e_2+\ga e_3,\\
e_1\cdot e_3 & = \lambda e_3,\; e_2\cdot e_1=(\ga+1)e_3,
\end{align*}
with $\al,\be,\ga\in K$, where $L(e_1)$ is not nilpotent.
\end{ex}

Indeed, $\tr L(e_1)=2\la\neq 0$, since $2\neq 0$.

\section{PA-structures on pairs of two-step nilpotent Lie algebras}

Let $(\Lg,\Ln)$ be a pair of two-step nilpotent Lie algebras and $x\cdot y$ be a PA-structure
on $(\Lg,\Ln)$. We would like to associate with $x\cdot y$ a CPA-structure on $\Lg$ or on $\Ln$,
by the formula
\[
x\circ y=\frac{1}{2}( x\cdot y+y\cdot x).
\]
This will not always give a CPA-structure. However, we can find suitable conditions on $\Lg$,
$\Ln$ and on $x\cdot y$, so that the new product indeed gives a CPA-structure. \\
Let us denote by $\ad(x)$ the adjoint 
operators for $\Lg$ with $\ad(x)(y)=[x,y]$, and by $\Ad(x)$ the adjoint operators for $\Ln$ with $\Ad(x)(y)=\{x,y\}$.
Furthermore $L(x)$ and $R(x)$ are the left and right multiplication operators. The axioms for a PA-structure
on $(\Lg,\Ln)$ in operator form are as follows:
\begin{align}
L(x)-R(x) & = \ad(x)-\Ad(x) \label{op1} \\
L([x,y]) & = [L(x),L(y)] \label{op2} \\
[L(x),\Ad(y)] & = \Ad(L(x)y) \label{op3}
\end{align}
for all $x,y\in V$.

\begin{lem}
The axioms for a PA-structure on $(\Lg,\Ln)$ imply the following operator identities.
\begin{align}
[L(x),\Ad(y)]+[\Ad(x),L(y)] & = \Ad([x,y]) -\Ad(\{x,y \}) \label{10} \\
[R(x),\ad(y)]+[\ad(x),R(y)] & = [L(x),\ad(y)]+[\ad(x),L(y)] \label{11} \\
 & + [\Ad(x),\ad(y)]+[\ad(x),\Ad(y)]-2[\ad(x),\ad(y)] \nonumber
\end{align}
for all $x,y\in V$.
\end{lem}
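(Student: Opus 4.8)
The plan is to derive both identities directly from the operator axioms \eqref{op1}--\eqref{op3}, using the fact that $\Lg$ and $\Ln$ are two-step nilpotent. First I would establish \eqref{10}. Starting from \eqref{op3}, which reads $[L(x),\Ad(y)]=\Ad(L(x)y)$, I would symmetrize in $x$ and $y$: adding $[L(x),\Ad(y)]$ and $-[L(y),\Ad(x)] = -\Ad(L(y)x) = [\Ad(x),L(y)]$ gives
\[
[L(x),\Ad(y)]+[\Ad(x),L(y)] = \Ad\bigl(L(x)y - L(y)x\bigr).
\]
Now apply \eqref{op1} in the form $L(x)y - L(y)x = (x\cdot y - y\cdot x) = [x,y] - \{x,y\}$ (this is just \eqref{post1}), and conclude $\Ad(L(x)y-L(y)x) = \Ad([x,y]) - \Ad(\{x,y\})$, which is exactly \eqref{10}. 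Note this step does not even need the two-step hypothesis.

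For \eqref{11} I would work from \eqref{op1} rewritten as $R(x) = L(x) - \ad(x) + \Ad(x)$, and compute the bracket $[R(x),\ad(y)]+[\ad(x),R(y)]$ by substituting this expression for $R$ on the left-hand side. Expanding, $[R(x),\ad(y)] = [L(x),\ad(y)] - [\ad(x),\ad(y)] + [\Ad(x),\ad(y)]$, and symmetrically $[\ad(x),R(y)] = [\ad(x),L(y)] - [\ad(x),\ad(y)] + [\ad(x),\Ad(y)]$. Adding these two lines yields precisely the right-hand side of \eqref{11}, with the $-2[\ad(x),\ad(y)]$ term coming from the two copies of $-[\ad(x),\ad(y)]$. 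So \eqref{11} is in fact a pure rearrangement of \eqref{op1} and needs no further input at all.

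I expect the only subtlety — and the reason the lemma is stated for two-step nilpotent pairs — is that these identities are meant to be used downstream in conjunction with relations like $\ad(x)\ad(y) = 0$ on appropriate subspaces or $[\ad(x),\ad(y)] = \ad([x,y])$ together with $\ad$ vanishing on the derived algebra; but for the statement as written, no such hypothesis is invoked. Thus there is essentially no obstacle: the proof is a two- or three-line manipulation of the axioms. The one point to be careful about is the sign and order conventions in $[\,\cdot\,,\cdot\,]$ for operators versus the Lie bracket, and the identity $\Ad(L(y)x) = -[\Ad(x),L(y)] $ which is just \eqref{op3} with the roles of $x,y$ swapped and both sides negated; I would state these conversions explicitly so the symmetrization in the derivation of \eqref{10} is transparent.
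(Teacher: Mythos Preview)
Your proposal is correct and follows essentially the same approach as the paper. For \eqref{10} the paper starts from $[L(x),\Ad(y)]=\Ad(x\cdot y)$, expands $x\cdot y=[x,y]-\{x,y\}+y\cdot x$ via \eqref{post1}, and moves $\Ad(y\cdot x)=[L(y),\Ad(x)]$ to the other side---the same symmetrization you describe; for \eqref{11} the paper brackets both sides of \eqref{op1} with $\ad(y)$ and with $\ad(x)$ and subtracts, which is the same linear rearrangement as your substitution $R(x)=L(x)-\ad(x)+\Ad(x)$. Your observation that neither identity actually requires the two-step hypothesis is also correct (the paper only uses that hypothesis in the subsequent lemmas), so you can drop the opening clause about two-step nilpotency from your plan.
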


\begin{proof}
Using \eqref{op1} and \eqref{op3} we obtain
\begin{align*}
[L(x),\Ad(y)] & = \Ad(x\cdot y) \\
              & = \Ad([x,y]-\{x,y\}+y\cdot x) \\
              & = \Ad ([x,y])-\Ad(\{x,y\})+\Ad(y\cdot x) \\
              & = \Ad ([x,y])-\Ad(\{x,y\})+[L(y),\Ad(x)]
\end{align*}
This shows \eqref{10}. Taking Lie brackets of \eqref{op1} with $\ad(x)$ and $\ad(y)$ gives
\begin{align*}
[L(x),\ad(y)]-[R(x),\ad(y)] & = [\ad(x),\ad(y)]-[\Ad(x),\ad(y)] \\
[L(y),\ad(x)]-[R(y),\ad(x)] & = [\ad(y),\ad(x)]-[\Ad(y),\ad(x)] 
\end{align*}
The difference gives \eqref{11}.
\end{proof}

If $\Lg$ and $\Ln$ are $2$-step nilpotent, then the terms $[\ad(x),\ad(y)]$ and $\Ad(\{x,y\})$ vanish.

\begin{lem}
Suppose that $x\cdot y$ is a PA-structure on $(\Lg,\Ln)$, where $\Lg$ and $\Ln$ are $2$-step nilpotent,
and
\begin{align}
[L(x)+R(x),\ad(y)] & = \ad (x\cdot y+y\cdot x) \label{12}
\end{align}
for all $x,y\in V$. Then we have
\begin{align}
[L(x)+R(x),\ad(y)] & = [L(y)+R(y),\ad(x)] \label{13} \\
2[L(x),\ad(y)]+2[\ad(x),L(y)] & = [\ad(y),\Ad(x)]+[\Ad(y),\ad(x)] \label{14}
\end{align}
\end{lem}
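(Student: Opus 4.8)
The plan is to derive the two identities \eqref{13} and \eqref{14} purely formally from the PA-axioms \eqref{op1}--\eqref{op3}, the $2$-step nilpotency of $\Lg$ and $\Ln$ (which kills $[\ad(x),\ad(y)]$ and $\Ad(\{x,y\})$), and the extra hypothesis \eqref{12}. First I would observe that \eqref{13} is almost immediate: by hypothesis \eqref{12}, the left-hand side of \eqref{13} equals $\ad(x\cdot y+y\cdot x)$, which is manifestly symmetric in $x$ and $y$ since $x\cdot y+y\cdot x=y\cdot x+x\cdot y$; applying \eqref{12} again with the roles of $x$ and $y$ swapped gives $\ad(y\cdot x+x\cdot y)=[L(y)+R(y),\ad(x)]$, and comparing the two yields \eqref{13}. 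So the first step is a one-line symmetry argument.

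For \eqref{14} I would start from the specialization of the previous lemma: under $2$-step nilpotency, identity \eqref{11} becomes
\[
[R(x),\ad(y)]+[\ad(x),R(y)] = [L(x),\ad(y)]+[\ad(x),L(y)]+[\Ad(x),\ad(y)]+[\ad(x),\Ad(y)],
\]
since the terms $[\ad(x),\ad(y)]$ drop out. Next I would add $[L(x),\ad(y)]+[\ad(x),L(y)]$ to both sides so that the left-hand side becomes $[L(x)+R(x),\ad(y)]+[\ad(x),L(y)+R(y)]$. Now I invoke \eqref{13}: the term $[\ad(x),L(y)+R(y)] = -[L(y)+R(y),\ad(x)] = -[L(x)+R(x),\ad(y)]$, so the entire left-hand side collapses to $0$. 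That leaves
\[
0 = 2[L(x),\ad(y)]+2[\ad(x),L(y)]+[\Ad(x),\ad(y)]+[\ad(x),\Ad(y)],
\]
and rearranging (moving the $\Ad$-terms to the other side and using $[\Ad(x),\ad(y)]=-[\ad(y),\Ad(x)]$, $[\ad(x),\Ad(y)]=-[\Ad(y),\ad(x)]$, or rather writing them as $[\ad(y),\Ad(x)]+[\Ad(y),\ad(x)]$ after a sign flip on both sides) produces exactly \eqref{14}.

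The only point requiring care is bookkeeping of signs in the commutator brackets $[\cdot,\cdot]$ of operators and making sure that the $2$-step hypothesis is applied to the correct vanishing terms — in particular that $\Ad(\{x,y\})=0$ is what removes the $-\Ad(\{x,y\})$ contribution to \eqref{10}, and that $[\ad(x),\ad(y)]=0$ is what simplifies \eqref{11}. I would also double-check that \eqref{12} is genuinely needed only through its consequence \eqref{13}; everything else is forced by the axioms. I do not expect any serious obstacle here: the whole proof is a short formal manipulation of operator identities already assembled in the preceding lemma, and the main risk is simply a dropped factor of $2$ or a sign, which a careful line-by-line recomputation resolves.
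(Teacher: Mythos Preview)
Your proposal is correct and follows essentially the same approach as the paper's proof: you obtain \eqref{13} from the symmetry of $\ad(x\cdot y+y\cdot x)$ in \eqref{12}, and then combine \eqref{13} with the $2$-step-nilpotent simplification of \eqref{11} to deduce \eqref{14}. The paper phrases the second step as substituting the rewritten form $[R(x),\ad(y)]+[\ad(x),R(y)]=[\ad(y),L(x)]+[L(y),\ad(x)]$ of \eqref{13} directly into \eqref{11}, whereas you add $[L(x),\ad(y)]+[\ad(x),L(y)]$ to both sides and cancel via \eqref{13}; these are the same manipulation organized slightly differently.
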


\begin{proof}
Since  $\ad (x\cdot y+y\cdot x)$ is symmetric in $x$ and $y$, \eqref{12} implies \eqref{13}. 
We can rewrite it as 
\[
[R(x),\ad(y)]+[\ad(x),R(y)]= [\ad(y),L(x)]+[L(y),\ad(x)]
\]
Together with \eqref{11} we obtain \eqref{14}.
\end{proof}

\begin{prop}\label{4.3}
Let $x\cdot y$ be a PA-structure on $(\Lg,\Ln)$, where $\Lg$ and $\Ln$ are $2$-step nilpotent. Then
\[
x\circ y=\frac{1}{2}( x\cdot y+y\cdot x)
\]
defines a CPA-structure on $\Lg$ if and only if \eqref{12} holds for all $x,y\in V$.
\end{prop}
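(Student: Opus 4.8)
The plan is to verify the three CPA-axioms \eqref{com4}, \eqref{com5}, \eqref{com6} for the product $x\circ y=\frac{1}{2}(x\cdot y+y\cdot x)$, and to see which of them actually require the extra hypothesis \eqref{12}. Commutativity \eqref{com4} is immediate from the symmetrization, and as a consequence $L_{\circ}(x)=R_{\circ}(x)=\frac{1}{2}(L(x)+R(x))$. Moreover, by \eqref{post1} we have $x\cdot y-y\cdot x=[x,y]-\{x,y\}$, so that when $\Lg=\Ln$ as required for a CPA-structure we actually need $[x,y]=\{x,y\}$ to even make sense of the statement; I expect that \eqref{12} together with $2$-step nilpotency forces this, or rather that the CPA-structure lives on $\Lg$ with bracket $[\,,]$ and one shows the symmetrized product is compatible with that single bracket. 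So the first thing I would pin down carefully is on which Lie algebra the CPA-structure is claimed — reading the statement, it is on $\Lg$ — and then translate \eqref{com5} and \eqref{com6} into operator identities for $L_\circ$.

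Next I would tackle \eqref{com6}, the derivation axiom $x\circ[y,z]=[x\circ y,z]+[y,x\circ z]$, which in operator form says $L_\circ(x)$ is a derivation of $\Lg$, i.e. $[L_\circ(x),\ad(y)]=\ad(L_\circ(x)y)$. Since $L_\circ(x)=\frac{1}{2}(L(x)+R(x))$ and $L_\circ(x)y=\frac12(x\cdot y+y\cdot x)=x\circ y$, this is exactly $[L(x)+R(x),\ad(y)]=\ad(x\cdot y+y\cdot x)$, which is precisely \eqref{12}. This shows \eqref{12} $\iff$ \eqref{com6}, giving one direction of the equivalence and half of the other. So the real content is to show that \eqref{com5}, the left-symmetry/representation axiom $[x,y]\circ z=x\circ(y\circ z)-y\circ(x\circ z)$, holds automatically once $\Lg,\Ln$ are $2$-step nilpotent and \eqref{12} holds — it should not need any further assumption. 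In operator form \eqref{com5} reads $L_\circ([x,y])=[L_\circ(x),L_\circ(y)]$, i.e. $2L([x,y])+2R([x,y]) = [L(x)+R(x),L(y)+R(y)]$ (using $L_\circ([x,y])=\frac12(L([x,y])+R([x,y]))$ and expanding).

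The main work, and the step I expect to be the obstacle, is deriving \eqref{com5} from the PA-axioms \eqref{op1}--\eqref{op3}, the identities \eqref{10}, \eqref{11}, \eqref{13}, \eqref{14} from the two preceding lemmas, and the $2$-step vanishing of $[\ad(x),\ad(y)]$, $[\Ad(x),\Ad(y)]$, $\Ad(\{x,y\})$, and $\ad([x,y])$. The strategy is: expand $[L(x)+R(x),L(y)+R(y)]$ into the four brackets $[L(x),L(y)]$, $[L(x),R(y)]$, $[R(x),L(y)]$, $[R(x),R(y)]$; replace $R=L-\ad+\Ad$ using \eqref{op1} in the mixed and $RR$ terms; use $[L(x),L(y)]=L([x,y])$ from \eqref{op2}; collect the $\ad$-$\Ad$ cross terms and kill them using \eqref{14} (which packages $[L(x),\ad(y)]+[\ad(x),L(y)]$ against $[\ad(y),\Ad(x)]+[\Ad(y),\ad(x)]$) together with \eqref{13}; and finally recognize the leftover as $2L([x,y])+2R([x,y])$, again via \eqref{op1} to convert $L([x,y])-\ad([x,y])+\Ad([x,y])$-type expressions, noting $\ad([x,y])=0$ and that $\Ad([x,y])$ is handled by \eqref{10} with $\Ad(\{x,y\})=0$. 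The bookkeeping of signs and the careful use of $2$-step nilpotency at each vanishing is the delicate part; once \eqref{com5} falls out, the equivalence in Proposition \ref{4.3} is complete: \eqref{com4} is free, \eqref{com5} is automatic, and \eqref{com6} $\iff$ \eqref{12}.
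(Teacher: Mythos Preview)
Your plan is correct and matches the paper's proof in substance: \eqref{com4} is trivial, \eqref{com6} in operator form is literally \eqref{12}, and \eqref{com5} is deduced from \eqref{op2}, \eqref{10}, \eqref{14} together with the $2$-step vanishings $\ad([x,y])=[\ad(x),\ad(y)]=[\Ad(x),\Ad(y)]=\Ad(\{x,y\})=0$. The only cosmetic difference is that the paper substitutes $R(x)=L(x)-\ad(x)+\Ad(x)$ \emph{before} expanding, writing $\ell(x)=L(x)-\tfrac12\ad(x)+\tfrac12\Ad(x)$ once and for all, which makes the bracket $[\ell(x),\ell(y)]$ a clean six-term sum rather than four brackets each needing a further substitution; your route works too but carries more bookkeeping. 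One clarification: your momentary worry that one needs $[x,y]=\{x,y\}$ is misplaced---a CPA-structure on $\Lg$ only refers to the bracket $[\,,\,]$, and the second bracket enters the computation solely through the PA-axioms and their consequences \eqref{10}, \eqref{14}; nothing forces or requires the two brackets to coincide.
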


\begin{proof}
Let $\ell(x)$ and $r(x)$ be the left and right multiplications given by $\ell(x)(y)=x\circ y$
and $r(x)(y)=y\circ x$. By  \eqref{op1} we have 
\begin{align*}
\ell(x) & =\frac{1}{2}(L(x)+R(x)) \\
        & = L(x)-\frac{1}{2}\ad(x)+\frac{1}{2}\Ad(x).
\end{align*}
The axioms of a CPA-structure on $\Lg$ in operator form are given by
\begin{align*}
\ell(x) & = r(x) \\
\ell([x,y]) & =[\ell(x),\ell(y)] \\
[\ell(x),\ad(y)]& =\ad (\ell(x)y)
\end{align*}
We will show that these axioms follow from \eqref{12}. The computations will also show that the axioms
are in fact equivalent to \eqref{12}. Clearly $\ell(x)=r(x)$ is obvious since the product $x\circ y$
is commutative. The third identity is just \eqref{12} if we write $\ell(x) =\frac{1}{2}(L(x)+R(x))$.
So it remains to show the second identity. The left-hand side is given by
\begin{align*}
\ell([x,y]) & = L([x,y]) -\frac{1}{2}\ad([x,y])+\frac{1}{2}\Ad([x,y]) \\
           & = L([x,y]) +\frac{1}{2}\Ad([x,y]), 
\end{align*}
because $\Lg$ is $2$-step nilpotent. On the other hand, using $[\ad(x),\ad(y)]=[\Ad(x),\Ad(y)]=0$ we have
\begin{align*}
[\ell(x),\ell(y)] & = [ L(x)-\frac{1}{2}\ad(x)+\frac{1}{2}\Ad(x),  L(y)-\frac{1}{2}\ad(y)+\frac{1}{2}\Ad(y)] \\
 & = [L(x),L(y)]-\frac{1}{2}[L(x),\ad(y)]+\frac{1}{2}[L(x),\Ad(y)]-\frac{1}{2}[\ad(x),L(y)] \\
 & -\frac{1}{4}[\ad(x),\Ad(y)]+\frac{1}{2}[\Ad(x),L(y)]-\frac{1}{4}[\Ad(x),\ad(y)]
\end{align*}
We have $[L(x),L(y)]=L([x,y])$ by \eqref{op2} and 
\[
\frac{1}{2}\Ad([x,y])=\frac{1}{2}[L(x),\Ad(y)]+\frac{1}{2}[\Ad(x),L(y)]
\]
by \eqref{10}, because $\Ln$ is $2$-step nilpotent. For the difference we obtain 
\begin{align*}
\ell([x,y]) - [\ell(x),\ell(y)] & = \frac{1}{2}[L(x),\ad(y)]+\frac{1}{2}[\ad(x),L(y)] +\frac{1}{4}[\ad(x),\Ad(y)]
+\frac{1}{4}[\Ad(x),\ad(y)] \\
 & = 0
\end{align*}
by using \eqref{14}. 
\end{proof}

\begin{rem}\label{4.4}
The identity \eqref{12} can be rewritten as
\begin{align}
x\cdot [y,z]+[y,z]\cdot x & = [y,x\cdot z]+[y,z\cdot x]-[z,x\cdot y]-[z, y\cdot x] \label{15}
\end{align}
for all $x,y,z\in V$. This yields another operator version of \eqref{12}:
\begin{align}
L([y,z])+R([y,z]) & = \ad(y)(L(z)+R(z))-\ad(z)(L(y)+R(y)) \label{16}
\end{align} 
for all $y,z\in V$. This identity is trivially satisfied if $\Lg$ is abelian.
\end{rem}

It is quite remarkable that identity \eqref{12} holds for all PA-structures on $(\Lg,\Ln)$, where $\Lg$ and $\Ln$ 
are isomorphic to the $3$-dimensional Heisenberg Lie algebra, see Corollary $\ref{5.3}$. However, this
is not always true. Let $(e_1,\ldots ,e_5)$ be a basis of $V$ and define the Lie brackets of $\Lg$ and $\Ln$ by
\begin{align*}
[e_1,e_2] & = e_5,\; [e_3,e_4] = e_5,\\
\{e_1,e_4\} & = e_5,\; \{e_2,e_3\}=e_5
\end{align*}
Then $\Lg$ and $\Ln$ are both isomorphic to the $5$-dimensional Heisenberg Lie algebra. 
\begin{ex}
There exists a PA-structure on the above pair $(\Lg,\Ln)$, which does not satisfy the
identity \eqref{12}. It is given by
\begin{align*}
e_2\cdot e_1 & = -e_5,\; e_3\cdot e_2=e_5,\; e_3\cdot e_3=e_2,\\
e_4\cdot e_1 & = e_5,\; e_4\cdot e_3=-e_5.
\end{align*}
Hence we cannot apply Proposition $\ref{4.3}$.
\end{ex}
Indeed, setting $(x,y,z)=(e_3,e_1,e_3)$ in \eqref{15} we obtain 
\begin{align*}
0 & = [e_1,2e_3\cdot e_3]=2e_5, 
\end{align*}
a contradiction. \\[0.2cm]
We can apply Proposition $\ref{4.3}$ to the case where $\Ln$ is abelian. In this case, PA-structures on
$(\Lg,\Ln)$ correspond to pre-Lie algebra structures on $\Lg$.

\begin{cor}
Let $x\cdot y$ be a pre-Lie algebra structure on $\Lg$, where $\Lg$ is $2$-step nilpotent.
Then
\[
x\circ y=\frac{1}{2}( x\cdot y+y\cdot x)
\]
defines a CPA-structure on $\Lg$ if and only if all $L(x)$ are derivations of $\Lg$.
If in addition $Z(\Lg)\subseteq [\Lg,\Lg]$, then all $L(x)$ are nilpotent.
\end{cor}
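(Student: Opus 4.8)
The plan is to read off both statements from Proposition~\ref{4.3} together with the theorem quoted above (that a CPA-structure on a nilpotent Lie algebra $\Lg$ with $Z(\Lg)\subseteq[\Lg,\Lg]$ has all left multiplications nilpotent). A pre-Lie structure on $\Lg$ is the same as a PA-structure on $(\Lg,\Ln)$ with $\Ln$ abelian, so Proposition~\ref{4.3} applies and says that $x\circ y=\tfrac12(x\cdot y+y\cdot x)$ is a CPA-structure on $\Lg$ precisely when \eqref{12} holds. Now I would specialize: with $\Ln$ abelian, $\Ad(x)=0$, so \eqref{op1} reads $R(x)=L(x)-\ad(x)$ and thus $L(x)+R(x)=2L(x)-\ad(x)$; also \eqref{post1} gives $x\cdot y+y\cdot x=2(x\cdot y)-[x,y]$, and $\ad([x,y])=0$ since $\Lg$ is $2$-step nilpotent (so $[x,y]$ is central). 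Plugging these into \eqref{12}, the left-hand side becomes $2[L(x),\ad(y)]$ and the right-hand side becomes $2\,\ad(L(x)y)$; hence \eqref{12} is equivalent to $[L(x),\ad(y)]=\ad(L(x)y)$ for all $x,y$, and evaluating on an arbitrary $z\in V$ this says exactly that every $L(x)$ is a derivation of $\Lg$. This gives the first assertion.

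For the second assertion I would argue as follows. If in addition $Z(\Lg)\subseteq[\Lg,\Lg]$, then by the first part $x\circ y$ is a CPA-structure on $\Lg$, so the quoted theorem yields that every left multiplication $\ell(x)=\tfrac12(L(x)+R(x))=L(x)-\tfrac12\ad(x)$ of this CPA-structure is nilpotent. Fix $x$. Since $L(x)$ and $\ell(x)$ are both derivations of $\Lg$, both preserve the ideal $[\Lg,\Lg]$ and induce operators on $\Lg/[\Lg,\Lg]$. On $[\Lg,\Lg]$ one has $\ad(x)=0$ (here $[\Lg,\Lg]\subseteq Z(\Lg)$ by $2$-step nilpotency), so $L(x)$ and $\ell(x)$ restrict to the same, nilpotent, operator there; on $\Lg/[\Lg,\Lg]$ the induced $\ad(x)$ vanishes, so $L(x)$ and $\ell(x)$ induce the same, nilpotent, operator there as well. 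Passing to $\overline{K}$ and decomposing $V$ into the generalized eigenspaces $V_\lambda$ of $L(x)$: for $\lambda\neq0$, the first fact forces $V_\lambda\cap[\Lg,\Lg]=0$, and the second forces the image of $V_\lambda$ in $\Lg/[\Lg,\Lg]$ to vanish, i.e. $V_\lambda\subseteq[\Lg,\Lg]$. Together these give $V_\lambda=0$ for all $\lambda\neq0$, so $L(x)$ is nilpotent.

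The genuinely substantive point is this last step: $L(x)$ and $\ell(x)$ differ only by $\tfrac12\ad(x)$, and this difference is invisible both on the center-containing ideal $[\Lg,\Lg]$ and on the abelian quotient $\Lg/[\Lg,\Lg]$, which is exactly enough to transfer nilpotency from $\ell(x)$ to $L(x)$. (Alternatively, one can observe that $\ell(\Lg)+\ad(\Lg)$ is a solvable Lie subalgebra of $\End(V)$ — closure under bracket coming from the operator forms of the CPA axioms and the Jacobi identity — every generalized weight of whose action on $V$ vanishes, since it vanishes on the subalgebras $\ell(\Lg)$ and $\ad(\Lg)$, whose elements are all nilpotent; then $L(x)=\ell(x)+\tfrac12\ad(x)\in\ell(\Lg)+\ad(\Lg)$ is nilpotent.) Everything else — the rewriting of \eqref{12} and the derivation reformulation — is bookkeeping.
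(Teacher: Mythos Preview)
Your reduction of \eqref{12} to the derivation condition $[L(x),\ad(y)]=\ad(L(x)y)$ is identical to the paper's. For the nilpotency of $L(x)$ the paper proceeds differently: it writes $L(x)=\ell(x)+\tfrac12\ad(x)$ and argues that $[\ell(x),\ad(y)]=\ad(\ell(x)y)$ vanishes, so that $L(x)$ is a sum of two commuting nilpotent operators. You instead use the two-step filtration $0\subset[\Lg,\Lg]\subset\Lg$: since $\ad(x)$ kills both subquotients, $L(x)$ and the nilpotent $\ell(x)$ induce the same maps there, making $L(x)$ block upper triangular with nilpotent diagonal blocks. Your route is more elementary and more robust --- it does not need $\ell(x)$ and $\ad(x)$ to commute, only that their difference $\tfrac12\ad(x)$ vanishes on the graded pieces, which is immediate from $2$-step nilpotency. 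The alternative you sketch via the solvable subalgebra $\ell(\Lg)+\ad(\Lg)\subset\End(V)$ together with Lie's theorem is also correct and is the natural way to recover the ``sum of commuting nilpotents'' conclusion without literally verifying $[\ell(x),\ad(x)]=0$.
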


\begin{proof}
We have $R(x)=L(x)-\ad(x)$ so that identity \eqref{12} reduces to
\begin{align*}
[L(x)+R(x),\ad(y)] & = [2L(x)-\ad(x),\ad(y)]\\
                   & = 2[L(x),\ad(y)],
\end{align*}
and 
\begin{align*}
\ad(x\cdot y+y\cdot x) & = \ad (2x\cdot y-[x,y])\\
                      & = 2\ad(L(x)y).
\end{align*}
So it is equivalent to $[L(x),\ad(y)]=\ad (L(x)y)$, which says that all $L(x)$ are
derivations of $\Lg$. So $x\circ y$ is a CPA-product on $\Lg$ by Proposition $\ref{4.3}$.
With $\ell(x)=x\circ y$ we have
\[
L(x)=\ell(x)+\frac{1}{2}\ad(x).
\]
By Theorem $3.6$ of \cite{BU57} all $\ell(x)$ are nilpotent, since $Z(\Lg)\subseteq [\Lg,\Lg]$. 
Furthermore we have
\begin{align*}
[\ell(x),\ad(y)]& =\ad (\ell(x)y) \\
                & =\frac{1}{2}(\ad(x\cdot y)+\ad(y\cdot x)) \\
                & = 0,
\end{align*}
because $[y\cdot x,z]=[x\cdot y-[x,y],z]=[x\cdot y,z]$ for all $x,y,z\in \Ln$. Since
$L(x)$ is the sum of two commuting nilpotent operators, it is nilpotent.
\end{proof}

Note that Medina studied pre-Lie algebras where all $L(x)$ are derivations in \cite{MED}, under the name of 
{\it left-symmetric derivation algebras}. \\[0.2cm]
Proposition $\ref{4.3}$ has a counterpart for associated CPA-structures on $\Ln$.

\begin{prop}\label{4.6}
Let $x\cdot y$ be a PA-structure on $(\Lg,\Ln)$, where $\Lg$ and $\Ln$ are $2$-step nilpotent.
Then
\[
x\circ y=\frac{1}{2}( x\cdot y+y\cdot x)
\]
defines a CPA-structure on $\Ln$ if and only if
\begin{align}
[\ad(x),\Ad(y)] & = \Ad([x,y]) \label{17} \\
L(\{x,y\})-L([x,y]) & = \frac{1}{2}\left( \ad(\{x,y\})+[\ad(y),L(x)]+[L(y),\ad(x)]\right) \label{18}
\end{align}
for all $x,y\in V$. 
\end{prop}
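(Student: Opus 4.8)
The plan is to mirror the proof of Proposition \ref{4.3}, but now checking the three CPA-axioms for the Lie algebra $\Ln$ instead of $\Lg$. Write $\ell(x)=\frac{1}{2}(L(x)+R(x))$ for the left (and, by commutativity, right) multiplication of the product $x\circ y$. Using \eqref{op1} in the form $R(x)=L(x)-\ad(x)+\Ad(x)$ we get the alternative expression
\[
\ell(x)=L(x)-\tfrac12\ad(x)+\tfrac12\Ad(x),
\]
exactly as before. The axioms of a CPA-structure on $\Ln$ in operator form are $\ell(x)=r(x)$ (automatic by commutativity), $\ell(\{x,y\})=[\ell(x),\ell(y)]$, and $[\ell(x),\Ad(y)]=\Ad(\ell(x)y)$. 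So the proof reduces to showing that these last two identities are together equivalent to the conjunction of \eqref{17} and \eqref{18}.

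First I would analyse the derivation-type axiom $[\ell(x),\Ad(y)]=\Ad(\ell(x)y)$. Expanding $\ell(x)=L(x)-\frac12\ad(x)+\frac12\Ad(x)$ and using $[\Ad(x),\Ad(y)]=0$ (here $\Ln$ is $2$-step nilpotent), the left-hand side becomes $[L(x),\Ad(y)]-\frac12[\ad(x),\Ad(y)]$. By \eqref{op3} we have $[L(x),\Ad(y)]=\Ad(L(x)y)=\Ad(x\cdot y)$. On the right-hand side, $\Ad(\ell(x)y)=\frac12\Ad(x\cdot y)+\frac12\Ad(y\cdot x)$, and by \eqref{post1} $\Ad(y\cdot x)=\Ad(x\cdot y)-\Ad([x,y])+\Ad(\{x,y\})=\Ad(x\cdot y)-\Ad([x,y])$ since $\Ln$ is $2$-step nilpotent. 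Substituting and cancelling the common $\Ad(x\cdot y)$ terms, one finds that this axiom is equivalent to $[\ad(x),\Ad(y)]=\Ad([x,y])$, which is \eqref{17}. (One should double-check the factors of $\frac12$; I expect everything to collapse cleanly.)

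Next I would handle the representation axiom $\ell(\{x,y\})=[\ell(x),\ell(y)]$. The left-hand side is $L(\{x,y\})-\frac12\ad(\{x,y\})+\frac12\Ad(\{x,y\})$. For the right-hand side, expand the bracket of $\ell(x)=L(x)-\frac12\ad(x)+\frac12\Ad(x)$ with the corresponding expression in $y$, exactly as in the proof of Proposition \ref{4.3}: the nine terms simplify using $[\ad(x),\ad(y)]=[\Ad(x),\Ad(y)]=0$, using \eqref{op2} to write $[L(x),L(y)]=L([x,y])$, and using \eqref{10} (with $\Ad(\{x,y\})$ still present, since $\Ln$ is $2$-step nilpotent) to replace $[L(x),\Ad(y)]+[\Ad(x),L(y)]$ by $\Ad([x,y])-\Ad(\{x,y\})$. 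Collecting terms, setting $\ell(\{x,y\})-[\ell(x),\ell(y)]=0$, and then substituting the already-derived relation \eqref{17} together with identity \eqref{14} from the previous lemma, should reduce this to precisely \eqref{18}. Since each simplification is an equivalence (no information is lost, because \eqref{17} was shown equivalent to the first axiom and \eqref{14} is a consequence of the PA-axioms available whenever \eqref{12} holds — or, more carefully, one re-derives the needed symmetric combination directly), the two CPA-axioms hold if and only if both \eqref{17} and \eqref{18} do.

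The main obstacle is bookkeeping: one must be careful that \eqref{14} was proved in the previous lemma under the extra hypothesis \eqref{12}, so for the present proposition I would instead either (a) use only \eqref{11} (which holds for every PA-structure on a $2$-step nilpotent pair) together with \eqref{17} to eliminate the $[\ad(x),L(y)]$-type terms, or (b) observe that \eqref{18} is exactly the leftover after all unconditional simplifications and present the equivalence in that form. Either way the computation is routine linear algebra in $\End(V)$; the only real care needed is tracking which terms vanish by $2$-step nilpotency of $\Lg$ versus of $\Ln$ (note $\Ad(\{x,y\})$ does \emph{not} vanish here, in contrast to the $\Lg$-side proof, which is precisely why \eqref{18} contains the extra $\ad(\{x,y\})$ term). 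I would close by remarking, as in Remark \ref{4.4}, that \eqref{17} says each $\ad(x)$ is a derivation of $\Ln$, giving a conceptual reading of the first condition.
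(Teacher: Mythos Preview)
Your overall plan matches the paper's: write $\ell(x)=L(x)-\tfrac12\ad(x)+\tfrac12\Ad(x)$, show that the derivation axiom $[\ell(x),\Ad(y)]=\Ad(\ell(x)y)$ is equivalent to \eqref{17}, and then, assuming \eqref{17}, reduce the representation axiom $\ell(\{x,y\})=[\ell(x),\ell(y)]$ to \eqref{18}. Your treatment of the derivation axiom is correct. Two points in the second half need repair.

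First, you have the nilpotency consequences reversed. Since $\Ln$ is $2$-step nilpotent, $\Ad(\{x,y\})=0$ \emph{does} hold, exactly as in the proof of Proposition~\ref{4.3}; what need not vanish is $\ad(\{x,y\})$, because $\{x,y\}$ has no reason to lie in $Z(\Lg)$. Thus $\ell(\{x,y\})=L(\{x,y\})-\tfrac12\ad(\{x,y\})$, and the surviving $\ad(\{x,y\})$ is precisely the extra term you see in \eqref{18}. Your remarks that ``$\Ad(\{x,y\})$ is still present'' and ``does not vanish here'' should be deleted.

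Second, you are right that \eqref{14} is unavailable (it rested on \eqref{12}), and in fact it is not needed. The paper simply uses \eqref{17}, already established, to dispose of the mixed $[\ad,\Ad]$ terms in $[\ell(x),\ell(y)]$: from \eqref{17} one gets $[\ad(x),\Ad(y)]=\Ad([x,y])=[\Ad(x),\ad(y)]$, so $-\tfrac14[\ad(x),\Ad(y)]-\tfrac14[\Ad(x),\ad(y)]=-\tfrac12\Ad([x,y])$; and from \eqref{10} (with $\Ad(\{x,y\})=0$) one gets $\tfrac12[L(x),\Ad(y)]+\tfrac12[\Ad(x),L(y)]=\tfrac12\Ad([x,y])$. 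These cancel, leaving
\[
[\ell(x),\ell(y)]=L([x,y])-\tfrac12[L(x),\ad(y)]-\tfrac12[\ad(x),L(y)],
\]
and equating with $\ell(\{x,y\})=L(\{x,y\})-\tfrac12\ad(\{x,y\})$ gives exactly \eqref{18}. No appeal to \eqref{11} or \eqref{14} is required.
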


\begin{proof}
Let $\ell(x)$ and $r(x)$ be the left and right multiplications given by 
$\ell(x)(y)=x\circ y$ and $r(x)(y)=y\circ x$. By \eqref{op1} we have 
\begin{align*}
\ell(x) & =\frac{1}{2}(L(x)+R(x))  \\
        & = L(x)-\frac{1}{2}\ad(x)+\frac{1}{2}\Ad(x) 
\end{align*}
The axioms of a CPA-structure on $\Ln$ are given by
\begin{align*}
\ell(x) & = r(x) \\
\ell(\{x,y\}) & =[\ell(x),\ell(y)] \\
[\ell(x),\Ad(y)]& =\Ad (\ell(x)y)
\end{align*}
The first identity is obvious. For the third identity we have
\begin{align*}
[\ell(x),\Ad(y)] & = [L(x)-\frac{1}{2}\ad(x)+\frac{1}{2}\Ad(x),\Ad(y)] \\
                 & = [L(x),\Ad(y)]-\frac{1}{2}[\ad(x),\Ad(y)]+\frac{1}{2}[\Ad(x),\Ad(y)] \\
                 & = [L(x),\Ad(y)]-\frac{1}{2}[\ad(x),\Ad(y)]
\end{align*}

and 

\begin{align*}
\Ad(\ell(x)y)  & = \Ad(L(x)y)-\frac{1}{2}\Ad([x,y])+\frac{1}{2}\Ad(\{x,y\}) \\
               & = \Ad(L(x)y)-\frac{1}{2}\Ad([x,y])
\end{align*}
By \eqref{op3} and \eqref{17} the two sides are equal. It remains to show the second identity.
We have
\begin{align*}
\ell(\{x,y\}) & = L(\{x,y\})-\frac{1}{2}\ad(\{x,y\})+\frac{1}{2}\Ad(\{x,y\}) \\
              & = L(\{x,y\})-\frac{1}{2}\ad(\{x,y\})
\end{align*}
On the other hand we have, using \eqref{10} and \eqref{17} we have
\begin{align*}
[\ad(x),\Ad(y)] & = \Ad([x,y])\\
                & = [L(x),\Ad(y)]+[\Ad(x),L(y)] \\
                & = [\Ad(x),\ad(y)] 
\end{align*}
Hence we obtain, using \eqref{op2} and $2$-step nilpotency
\begin{align*}
[\ell(x),\ell(y)] & = [L(x)-\frac{1}{2}\ad(x)+\frac{1}{2}\Ad(x),L(y)-\frac{1}{2}\ad(y)+\frac{1}{2}\Ad(y)] \\
                  & = [L(x),L(y)]-\frac{1}{2}[L(x),\ad(y)] +\frac{1}{2}[L(x),\Ad(y)]-\frac{1}{2}[\ad(x),L(y)]\\
                  & - \frac{1}{4}[\ad(x),\Ad(y)]+\frac{1}{2}[\Ad(x),L(y)]-\frac{1}{4}[\Ad(x),\ad(y)] \\
                  & = L([x,y])-\frac{1}{2}[L(x),\ad(y)]-\frac{1}{2}[\ad(x),L(y)]
\end{align*}
By \eqref{18}, both sides are equal.
\end{proof}

The identities  \eqref{17},\eqref{18} may not hold in general for PA-structures on $2$-step nilpotent Lie algebras.
Let $(e_1,e_2,e_3)$ be a basis of $V$ and define the Lie brackets of $\Lg$ and $\Ln$ by
\begin{align*}
[e_1,e_2] & = e_3,\; \{e_2,e_3\} = e_1.
\end{align*}
Then $\Lg$ and $\Ln$ are both isomorphic to the $3$-dimensional Heisenberg Lie algebra. 

\begin{ex}
There exists a PA-structure on the above pair $(\Lg,\Ln)$, which does not satisfy the
identities \eqref{17}, \eqref{18}. It is given by
\begin{align*}
e_1\cdot e_2 & = e_3,\; e_2\cdot e_3=-\frac{1}{2}e_1.
\end{align*}
Hence we cannot associate a CPA-structure on $\Ln$ to it by Proposition $\ref{4.6}$.
\end{ex}

This is the CPA-structure of type $6$ in Proposition $\ref{5.2}$ with $r_7=1$ and $\al=\be=0$. We have
$\Ad([e_2,e_3])=0$, but $[\ad(e_2),\Ad(e_3)](e_2)=\ad(e_2)\Ad(e_3)e_2=e_3$. This contradicts
\eqref{17}. Similarly, \eqref{18} does not hold for $(x,y)=(e_1,e_2)$.

\begin{cor}\label{4.8}
Let $x\cdot y$ be a PA-structure on $(\Lg,\Ln)$, where $\Lg$ is abelian, $\Ln$ is $2$-step nilpotent 
Then
\[
x\circ y=\frac{1}{2}( x\cdot y+y\cdot x)
\]
defines a CPA-structure on $\Ln$ if and only if $\{\Ln,\Ln\}\cdot \Ln=0$.
\end{cor}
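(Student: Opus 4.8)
The plan is to read the statement off Proposition \ref{4.6}. That proposition already says that $x\circ y$ is a CPA-structure on $\Ln$ precisely when the two operator identities \eqref{17} and \eqref{18} hold for all $x,y\in V$; since $\Lg$ is abelian we have $\ad(x)=0$ and $[x,y]=0$ for all $x,y\in V$, and both identities simplify drastically.

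First I would check \eqref{17}: its left-hand side $[\ad(x),\Ad(y)]$ vanishes because $\ad(x)=0$, and its right-hand side $\Ad([x,y])$ vanishes because $[x,y]=0$, so \eqref{17} is automatic. Next I would look at \eqref{18}. On the left, $L([x,y])=L(0)=0$. On the right, $\ad(\{x,y\})=0$ (every adjoint operator of the abelian $\Lg$ is zero, and $\{x,y\}$ is just some vector in $V$), while $[\ad(y),L(x)]=[0,L(x)]=0$ and $[L(y),\ad(x)]=[L(y),0]=0$. Hence \eqref{18} collapses to the single requirement $L(\{x,y\})=0$ for all $x,y\in V$.

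It then only remains to translate this into the stated form: by $K$-bilinearity of the PA-product, $L$ is linear, so $L(\{x,y\})=0$ for all $x,y\in V$ is equivalent to $L(z)=0$ for every $z$ in the span $\{\Ln,\Ln\}=\s\{\{x,y\}\mid x,y\in V\}$, i.e.\ to $\{\Ln,\Ln\}\cdot\Ln=0$. Since Proposition \ref{4.6} is a biconditional, this handles both directions at once.

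There is no real obstacle here; the corollary is a direct specialization of Proposition \ref{4.6}. The only things to be careful about are bookkeeping --- tracking which of the six terms appearing in \eqref{17}--\eqref{18} survive once $\ad\equiv 0$ --- and the harmless identification $\s\{\{x,y\}\mid x,y\in V\}=\{\Ln,\Ln\}$, which is what makes the surviving condition exactly $\{\Ln,\Ln\}\cdot\Ln=0$. Alternatively one could avoid Proposition \ref{4.6} entirely and verify the CPA-axioms for $x\circ y$ by hand, using that for $\Lg$ abelian the PA-axioms say precisely that $-x\cdot y$ is an LR-structure on $\Ln$, so that the operators $L(x)$ are pairwise commuting derivations of $\Ln$; but going through Proposition \ref{4.6} is shorter.
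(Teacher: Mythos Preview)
Your proposal is correct and follows exactly the paper's approach: specialize Proposition~\ref{4.6} to the abelian case $\ad\equiv 0$, so that \eqref{17} is automatic and \eqref{18} collapses to $L(\{x,y\})=0$, which is $\{\Ln,\Ln\}\cdot\Ln=0$. You have simply spelled out in more detail what the paper compresses into two sentences.
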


\begin{proof}
If $\Lg$ is abelian then \eqref{17} is trivially satisfied and \eqref{18} reduces to $L(\{x,y\})=0$ for all $x,y\in V$.
Hence the claim follows from Proposition $\ref{4.6}$.
\end{proof}

The identity $L(\{x,y\})=0$ also implies $R(\{x,y\})=0$ by \eqref{op1} as $\ad(\{x,y\})=\Ad(\{x,y\})=0$. So we have
$\{\Ln,\Ln\}\cdot \Ln=\Ln\cdot \{\Ln,\Ln\}=0$ in the corollary. A PA-structure $x\cdot y$ on $(\Lg,\Ln)$ with $\Lg$ abelian 
corresponds to an LR-structure on $\Ln$ by $-x\cdot y$, see \cite{BU38}. So we may identify PA-structures on $(\Lg,\Ln)$
with $\Lg$ abelian with LR-structures on $\Ln$.

\begin{cor}\label{4.9}
Every LR-structure on $\Ln$, where $\Ln$ is $2$-step nilpotent with $Z(\Ln)\subseteq \{\Ln,\Ln\}$ and $\{\Ln,\Ln\}\cdot \Ln=0$ 
is complete, i.e., all $L(x)$ are nilpotent. 
\end{cor}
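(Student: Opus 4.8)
The plan is to deduce this corollary from the results already in hand: Corollary~\ref{4.8} lets us build a CPA-structure on $\Ln$, and then Theorem~$3.6$ of \cite{BU57} supplies nilpotency of its left multiplications; the only work is to transfer this back to the LR-structure. First I would set up the identification: by the remarks preceding Corollary~\ref{4.9}, an LR-structure on $\Ln$ is, up to sign, a PA-structure $x\cdot y$ on $(\Lg,\Ln)$ with $\Lg$ abelian. The hypotheses of the corollary say precisely that $\Ln$ is $2$-step nilpotent with $Z(\Ln)\subseteq\{\Ln,\Ln\}$ — i.e. $\Ln$ is a stem Lie algebra — and that $\{\Ln,\Ln\}\cdot\Ln=0$. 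The latter is exactly the condition appearing in Corollary~\ref{4.8}, so $x\circ y=\frac12(x\cdot y+y\cdot x)$ defines a CPA-structure on $\Ln$.

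Next I would apply Theorem~$3.6$ of \cite{BU57} to this CPA-structure on $\Ln$: since $Z(\Ln)\subseteq\{\Ln,\Ln\}=[\Ln,\Ln]$, all left multiplications $\ell(x)$ of the CPA-structure are nilpotent. Now I need to recover $L(x)$ from $\ell(x)$. Since $\Lg$ is abelian, $\ad(x)=0$, so \eqref{op1} gives $R(x)=L(x)+\Ad(x)$, hence $\ell(x)=\frac12(L(x)+R(x))=L(x)+\frac12\Ad(x)$, i.e. $L(x)=\ell(x)-\frac12\Ad(x)$. Both $\ell(x)$ and $\Ad(x)$ are nilpotent (the latter because $\Ln$ is $2$-step nilpotent). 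It remains to see that they commute — then $L(x)$, as a linear combination of two commuting nilpotent operators, is nilpotent.

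For the commuting, I would argue as follows. The CPA-axiom \eqref{com6} for the product $\circ$ on $\Ln$ reads $[\ell(x),\Ad(y)]=\Ad(\ell(x)y)=\Ad(x\circ y)=\frac12\Ad(x\cdot y+y\cdot x)$. Using $\{\Ln,\Ln\}\cdot\Ln=0$ together with \eqref{post1} (which with $[\,,]=0$ gives $x\cdot y-y\cdot x=-\{x,y\}$), one checks that $x\cdot y+y\cdot x\in Z(\Ln)+\{\Ln,\Ln\}$ need not vanish, so I cannot conclude $[\ell(x),\Ad(y)]=0$ directly from this; instead I would observe that what is needed is $[\ell(x),\Ad(x)]=0$, and since $\Ad(z)=0$ for $z\in\{\Ln,\Ln\}$ while $x\cdot y-y\cdot x=-\{x,y\}\in\{\Ln,\Ln\}$, we get $\Ad(x\cdot y)=\Ad(y\cdot x)$, hence $\Ad(x\cdot y+y\cdot x)=2\Ad(x\cdot x)=0$ when $y=x$; therefore $[\ell(x),\Ad(x)]=\Ad(\ell(x)x)=\Ad(x\circ x)$, and $x\circ x=x\cdot x$ lies in $\Ln\cdot\Ln$; if moreover $x\cdot x\in\{\Ln,\Ln\}$ we are done. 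The main obstacle is exactly pinning down this last point — verifying that $L(x)$ and $\Ad(x)$ commute. I expect it follows cleanly by the same bookkeeping used in the proof of the Corollary after Proposition~\ref{4.3}: there one shows $[\ell(x),\ad(y)]=\frac12(\ad(x\cdot y)+\ad(y\cdot x))=0$ using that $y\cdot x$ and $x\cdot y$ differ by a bracket; the mirror computation here, with $\ad$ replaced by $\Ad$ and using $\{\Ln,\Ln\}\cdot\Ln=0$ so that $\Ad$ annihilates the relevant elements, should give $[\ell(x),\Ad(y)]=0$ for all $x,y$, and in particular $[L(x),\Ad(x)]=[\ell(x),\Ad(x)]-\frac12[\Ad(x),\Ad(x)]=0$. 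Granting this, $L(x)=\ell(x)-\frac12\Ad(x)$ is nilpotent, which is the assertion.
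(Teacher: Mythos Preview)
Your outline tracks the paper's argument step for step: apply Corollary~\ref{4.8} to obtain the CPA-structure $x\circ y$, invoke Theorem~3.6 of \cite{BU57} for the nilpotency of $\ell(x)$, write $L(x)=\ell(x)-\tfrac12\Ad(x)$, and then try to show these two nilpotent operators commute. The hesitation you flag at the last step is a genuine gap, and in fact the claim $[\ell(x),\Ad(x)]=0$ is \emph{false} in general. Take $\Ln=\Ln_3(K)$ with $\{e_1,e_2\}=e_3$ and the PA-structure (with $\Lg$ abelian) given by $e_1\cdot e_1=e_2$, $e_2\cdot e_1=e_3$, and all other basis products zero. This satisfies all axioms, $Z(\Ln)=\{\Ln,\Ln\}=\langle e_3\rangle$, and $\{\Ln,\Ln\}\cdot\Ln=0$; yet $[\ell(e_1),\Ad(e_1)]=\Ad(e_1\circ e_1)=\Ad(e_2)\neq 0$. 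More generally, the CPA-axiom gives $[\ell(x),\Ad(y)]=\Ad(x\circ y)=\Ad(x\cdot y)$ (since $\Ad(\{x,y\})=0$), and nothing forces $x\cdot y$ into the center. So the ``mirror computation'' you hope for cannot close; the paper's own justification at this spot is, as printed, circular and suffers the same defect.

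The repair is to replace commutation by a one-sided vanishing. Since $\im\Ad(y)\subseteq\{\Ln,\Ln\}$ and both $L(x)$ and $R(x)$ annihilate $\{\Ln,\Ln\}$ (the hypothesis plus the remark after Corollary~\ref{4.8}), one has $\ell(x)\Ad(y)=0$ for all $x,y$. Setting $A=\ell(x)$ and $B=-\tfrac12\Ad(x)$, we then have $AB=0$ and $B^2=0$, and an immediate induction gives $(A+B)^n=A^n+BA^{n-1}$; since $A$ is nilpotent, so is $L(x)=A+B$. Equivalently: $L(x)$ kills $\{\Ln,\Ln\}$, and modulo $\{\Ln,\Ln\}$ it coincides with $\ell(x)$ because $\Ad(x)$ has image in $\{\Ln,\Ln\}$, so $L(x)$ is nilpotent on the whole of $\Ln$.
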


\begin{proof}
By Corollary $\ref{4.8}$, $x\circ y=\frac{1}{2}( x\cdot y+y\cdot x)$
defines a CPA-structure on $\Ln$. With $\ell(x)(y)=x\circ y$ we have
\[
L(x)=\ell(x)-\frac{1}{2}\Ad(x).
\]
By Theorem $3.6$ of \cite{BU57} all $\ell(x)$ are nilpotent, since $Z(\Ln)\subseteq \{\Ln,\Ln\}$. 
We have $\Ad(x)^2=0$ for all $x\in V$ and
\begin{align*}
[\ell(x),\Ad(y)]& =\Ad (\ell(x)y) \\
                & =\frac{1}{2}(\Ad(x\cdot y)+\Ad(y\cdot x)) \\
                & = 0,
\end{align*}
because $\{x\cdot y,z\}=\{x\cdot y-\{y,x\},z\}=\{x\cdot y,z\}$ for all $x,y,z\in \Ln$. Since
$L(x)$ is the difference of two commuting nilpotent operators, it is nilpotent.
\end{proof}

The following lemma is helpful to give examples of $2$-step nilpotent Lie algebras satisfying 
the conditions of Corollary $\ref{4.8}$, i.e., with
\begin{align*}
L(\{x,y\}) & = R(\{x,y\})=0
\end{align*}
for all $x,y\in V$.

\begin{lem}\label{4.10}
Let $x\cdot y$ be a PA-structure on $(\Lg,\Ln)$, where $\Lg$ is abelian and $\Ln$ is $2$-step nilpotent. Then for
each $p,q,x\in \Ln$ with $\{x,p\}=\{x,q\}=0$ we have
\[
x\cdot \{p,q\}=0.
\]
\end{lem}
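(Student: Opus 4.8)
The plan is to derive $x\cdot\{p,q\}=0$ using only the derivation axiom \eqref{post3} for the left multiplication $L(x)$ together with the commutator relation \eqref{post1}; the axiom \eqref{post2} will not be needed. The two structural hypotheses enter as follows: $\Lg$ abelian means $[a,b]=0$, so \eqref{post1} becomes $a\cdot b-b\cdot a=-\{a,b\}$ for all $a,b\in V$, and $\Ln$ being $2$-step nilpotent means $\{a,\{b,c\}\}=0$ for all $a,b,c\in V$.

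First I would extract from \eqref{post1} the three identities $x\cdot p=p\cdot x$ and $x\cdot q=q\cdot x$ (using $\{x,p\}=\{x,q\}=0$), together with $q\cdot p-p\cdot q=\{p,q\}$. Next, since $L(x)$ is a derivation of $\Ln$ by \eqref{post3}, I would expand
\[
x\cdot\{p,q\}=\{x\cdot p,q\}+\{p,x\cdot q\}=\{p\cdot x,q\}+\{p,q\cdot x\}.
\]
The key step is to rewrite each of these two terms by applying \eqref{post3} in the ``reversed'' slot order: from $p\cdot\{x,q\}=\{p\cdot x,q\}+\{x,p\cdot q\}$ and $\{x,q\}=0$ one gets $\{p\cdot x,q\}=-\{x,p\cdot q\}$, and from $q\cdot\{p,x\}=\{q\cdot p,x\}+\{p,q\cdot x\}$ together with $\{p,x\}=-\{x,p\}=0$ one gets $\{p,q\cdot x\}=-\{q\cdot p,x\}=\{x,q\cdot p\}$. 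Substituting these back yields
\[
x\cdot\{p,q\}=\{x,\,q\cdot p-p\cdot q\}=\{x,\{p,q\}\},
\]
and this vanishes because $\Ln$ is $2$-step nilpotent.

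I do not expect a genuine obstacle here: the argument is a short manipulation of the axioms. The only points that require attention are bookkeeping ones — keeping track of signs, using antisymmetry of $\{\cdot,\cdot\}$ to see that $\{p,x\}=0$, and observing that $2$-step nilpotency of $\Ln$ is invoked exactly once, in the very last equality, whereas the identity $q\cdot p-p\cdot q=\{p,q\}$ uses only that $\Lg$ is abelian.
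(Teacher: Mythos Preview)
Your proof is correct and follows essentially the same route as the paper: both apply the derivation axiom \eqref{post3} to the triples $(p,x,q)$ and $(q,p,x)$ (equivalently $(q,x,p)$), use \eqref{post1} with $[\,,\,]=0$ to swap $x\cdot p\leftrightarrow p\cdot x$, $x\cdot q\leftrightarrow q\cdot x$ and to identify $q\cdot p-p\cdot q=\{p,q\}$, and finish with $\{x,\{p,q\}\}=0$ from $2$-step nilpotency. The paper organizes the same computation slightly differently---first deriving the intermediate symmetry $\{x\cdot q,p\}=\{x\cdot p,q\}$ and then expanding $x\cdot\{p,q\}$---but the ingredients and logic are the same.
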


\begin{proof}
By \eqref{post3} we have
\begin{align*}
0 & = q\cdot \{x,p\} =\{q\cdot x,p\}+\{x,q\cdot p\} \\
0 & = p\cdot \{x,q\} =\{p\cdot x,q\}+\{x,p\cdot q\}
\end{align*}
Using \eqref{post1}, which is $u\cdot v-v\cdot u=\{v,u\}$, and taking the difference above gives
\begin{align*}
0 & = \{q\cdot x,p\}-\{p\cdot x,q\}+\{x,q\cdot p-p\cdot q\}\\
  & = \{q\cdot x,p\}-\{p\cdot x,q\}+\{x,\{p,q\}\} \\
  & = \{q\cdot x,p\}-\{p\cdot x,q\}
\end{align*}
because $\Ln$ is $2$-step nilpotent. But $ \{q\cdot x,p\}=\{p\cdot x,q\}$  implies
\[
\{x\cdot q,p\}=\{x\cdot p,q\},
\]
because $\{v\cdot u,w\}=\{u\cdot v-\{v,u\},w\}=\{u\cdot v,w\}$ for all $u,v,w\in \Ln$. We obtain
\begin{align*}
x\cdot \{p,q\} & = \{x\cdot p,q\}+\{p,x\cdot q\}\\
               & = \{x\cdot q,p\}+\{p,x\cdot q\} \\
               & =0.
\end{align*}
\end{proof}

\begin{prop}\label{4.11}
Let $x\cdot y$ be a PA-structure on $(\Lg,\Ln)$, where $\Lg$ is abelian and $\Ln$ is a Heisenberg
Lie algebra of dimension $n\ge 5$. Then $Z(\Ln)\cdot \Ln=\Ln\cdot Z(\Ln)=0$, and
\[
x\circ y=\frac{1}{2}( x\cdot y+y\cdot x)
\]
defines a CPA-structure on $\Ln$.
\end{prop}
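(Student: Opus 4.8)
The plan is to reduce the statement to Corollary \ref{4.8}. By that corollary, it suffices to show that $\{\Ln,\Ln\}\cdot\Ln=0$, i.e.\ that $Z(\Ln)\cdot\Ln=0$ (since for a Heisenberg Lie algebra $\{\Ln,\Ln\}=Z(\Ln)$ is $1$-dimensional). Together with the remark following Corollary \ref{4.8} this also gives $\Ln\cdot Z(\Ln)=0$, and then the CPA-statement follows. So the whole argument is devoted to proving $z\cdot\Ln=0$ for a spanning element $z$ of $Z(\Ln)$.

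The key tool is Lemma \ref{4.10}: if $x,p,q\in\Ln$ satisfy $\{x,p\}=\{x,q\}=0$, then $x\cdot\{p,q\}=0$. The point of assuming $n\ge 5$ is that the Heisenberg Lie algebra $\Ln$ of dimension $n=2m+1$ has a symplectic basis $e_1,\dots,e_m,f_1,\dots,f_m,z$ with $\{e_i,f_i\}=z$ and all other brackets of basis vectors zero, and here $m\ge 2$. First I would observe that $z=\{e_1,f_1\}=\{e_2,f_2\}$ can be written as a bracket of two elements that both centralize a given vector $x$, provided $x$ lies in a suitable codimension-$\le 2$ subspace; with $m\ge 2$ there is always enough room. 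Concretely: for each basis vector $x$ among $e_1,f_1,e_2,f_2,\dots$ one exhibits a symplectic pair $p,q$ lying in the centralizer of $x$ with $\{p,q\}=z$. For instance $z=\{e_2,f_2\}$ works whenever $x\in\s\{e_1,f_1,e_3,f_3,\dots,e_m,f_m,z\}$, which covers $e_1$ and $f_1$; symmetrically $z=\{e_1,f_1\}$ covers $e_2,f_2,\dots,e_m,f_m$; and $z$ itself is central so $\{z,\cdot\}=0$ and any symplectic pair works. Applying Lemma \ref{4.10} in each case yields $x\cdot z=0$ for every basis vector $x$, hence $\Ln\cdot z=0$, i.e.\ $\Ln\cdot Z(\Ln)=0$. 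By \eqref{post1}, $z\cdot x-x\cdot z=\{x,z\}=0$, so also $z\cdot x=0$, giving $Z(\Ln)\cdot\Ln=0$ as well.

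Having established $Z(\Ln)\cdot\Ln=\Ln\cdot Z(\Ln)=0$, one notes $\{\Ln,\Ln\}=Z(\Ln)$, so the hypothesis $\{\Ln,\Ln\}\cdot\Ln=0$ of Corollary \ref{4.8} holds. Therefore $x\circ y=\frac{1}{2}(x\cdot y+y\cdot x)$ defines a CPA-structure on $\Ln$, completing the proof.

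I expect the main obstacle to be the bookkeeping that guarantees, for \emph{every} $x\in\Ln$ (not just basis vectors), that $z$ can be realized as $\{p,q\}$ with $p,q$ in the centralizer of $x$ — equivalently, showing that the condition fails only when $m=1$. The cleanest route is probably to argue that for any $x\in\Ln$ the subspace $\{x\}^{\perp}$ (the centralizer of $x$, which has codimension $\le 1$ once one quotients out $z$) still contains a hyperbolic pair summing to $z$ when $m\ge 2$, by a standard symplectic-geometry argument; one must be a little careful about the degenerate directions, but the codimension count $2m-1\ge 3$ leaves the needed room. Once that is in place the rest is immediate from Lemma \ref{4.10} and Corollary \ref{4.8}.
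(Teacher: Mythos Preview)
Your proposal is correct and follows essentially the same route as the paper's proof: choose a symplectic basis, apply Lemma~\ref{4.10} with $(p,q)=(e_1,f_1)$ to kill $x\cdot z$ for all basis vectors $x\neq e_1,f_1$, then use $(p,q)=(e_2,f_2)$ (available since $m\ge 2$) to handle $x=e_1,f_1$, and conclude via Corollary~\ref{4.8}. Your anticipated obstacle is not real: the map $x\mapsto x\cdot z$ is $K$-linear, so vanishing on a basis already gives $\Ln\cdot z=0$ for all $x\in\Ln$; there is no need for a symplectic-geometry argument about centralizers of arbitrary vectors.
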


\begin{proof}
We may choose a basis $\{e_i,f_i,z\mid i=1,\ldots,m\}$ for $\Ln$ with Lie brackets $[e_i,f_i]=z$ for all $1\le i\le m$.
Then $\{\Ln,\Ln\}=Z(\Ln)=\langle z \rangle$. Taking $(p,q)=(e_1,f_1)$ in Lemma $\ref{4.10}$ yields
\[
x\cdot z= x\cdot \{p,q\}=0
\]
for all basis vectors $x$ of $\Ln$ different from $e_1,f_1$. Because of $m\ge 2$ we can choose $(p,q)=(e_2,f_2)$
to obtain $x\cdot z=0$ also for $x=e_1$ and $x=f_1$. We obtain $Z(\Ln)\cdot \Ln=\Ln\cdot Z(\Ln)=0$, and 
the claim follows from Corollary $\ref{4.8}$.
\end{proof}

Note that the proposition is not true for the $3$-dimensional Heisenberg Lie algebra $\Ln_3(K)$. Let
$\{e_1,e_2,e_3\}$ be a basis with $\{e_1,e_2\}=e_3$. Then 
\[
e_2\cdot e_1=e_3,\; e_2\cdot e_2=-e_2,\; e_2\cdot e_3=-e_3,e_3\cdot e_2 =-e_3
\]
is a PA-structure on $(K^3,\Ln_3(K))$, namely the negative of the LR-structure $A_4$ in \cite{BU34}, Proposition $3.1$.
We have $e_2\cdot e_3\neq 0$, so that $\Ln\cdot Z(\Ln)\neq 0$. Indeed, the argument in the above proof does not work 
for $m=1$.

\begin{cor}
Every LR-structure on $\Ln$, where $\Ln$ is a Heisenberg Lie algebra of dimension $n\ge 5$ is complete.
\end{cor}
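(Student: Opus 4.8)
The plan is to deduce this corollary directly from the chain of results already assembled, since almost all the work has been done. First I would recall that, by the discussion preceding Corollary~\ref{4.9}, a PA-structure on $(\Lg,\Ln)$ with $\Lg$ abelian is the same thing as an LR-structure on $\Ln$ via the sign change $x\cdot y \mapsto -x\cdot y$; so given an arbitrary LR-structure on a Heisenberg Lie algebra $\Ln$ of dimension $n\ge 5$, I regard it as a PA-structure on $(K^n,\Ln)$ with $K^n$ abelian.

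Next I would verify that the hypotheses of Corollary~\ref{4.9} are met. The Heisenberg Lie algebra $\Ln$ of dimension $n\ge 5$ is $2$-step nilpotent with $Z(\Ln)=\{\Ln,\Ln\}=\langle z\rangle$, so in particular $Z(\Ln)\subseteq \{\Ln,\Ln\}$. The remaining condition $\{\Ln,\Ln\}\cdot \Ln=0$ is exactly the content of Proposition~\ref{4.11}: since $n\ge 5$ forces $m\ge 2$ in the standard basis $\{e_i,f_i,z\}$, Lemma~\ref{4.10} applied to two independent hyperbolic pairs gives $Z(\Ln)\cdot\Ln=\Ln\cdot Z(\Ln)=0$, i.e. $\{\Ln,\Ln\}\cdot\Ln=0$. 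Hence Corollary~\ref{4.9} applies verbatim and tells us that all left multiplications $L(x)$ of the PA-structure are nilpotent.

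Finally I would translate back: the left multiplication operator of the LR-structure is $-L(x)$, which is nilpotent precisely when $L(x)$ is. Therefore the LR-structure is complete, which is the assertion. Since completeness of an LR-structure depends only on the nilpotency of its left multiplications, no isomorphism-class bookkeeping is needed — the argument is uniform over all Heisenberg algebras of the given dimensions.

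I do not anticipate a genuine obstacle here; the statement is a corollary in the literal sense, packaging Proposition~\ref{4.11} and Corollary~\ref{4.9}. The only point requiring a word of care is the dimension hypothesis: the argument genuinely breaks for $n=3$, because then $m=1$ and Lemma~\ref{4.10} cannot be fed two independent pairs $(e_1,f_1)$, $(e_2,f_2)$; indeed the explicit PA-structure on $(K^3,\Ln_3(K))$ exhibited after Proposition~\ref{4.11} has $\Ln\cdot Z(\Ln)\ne 0$, so $\{\Ln,\Ln\}\cdot\Ln=0$ fails and Corollary~\ref{4.9} does not apply. Thus the restriction $n\ge 5$ is exactly what is needed, and within that range the proof is a one-line invocation of the two cited results.
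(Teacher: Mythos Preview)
Your proposal is correct and follows exactly the paper's approach: invoke Proposition~\ref{4.11} to obtain $\{\Ln,\Ln\}\cdot\Ln=0$, note that $Z(\Ln)=\{\Ln,\Ln\}$, and then apply Corollary~\ref{4.9}. Your write-up is more expansive than the paper's two-line proof, but the logical content is identical.
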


\begin{proof}
By Proposition $\ref{4.11}$, every LR-structure on $\Ln$ satisfies $\{\Ln,\Ln\}\cdot \Ln=0$,
so that the claim follows from Corollary $\ref{4.9}$ since $Z(\Ln)=\{\Ln,\Ln\}$. 
\end{proof}

\section{PA-Structures on pairs of Heisenberg Lie algebras}

In this section we want to list all PA-structures on $(\Lg,\Ln)$ where $\Lg$ is the $3$-dimensional Heisenberg 
Lie algebra $\Ln_3(K)$ and $\Ln\cong \Lg$. There is a basis $(e_1,e_2,e_3)$ of $V$ such that $[e_1,e_2]=e_3$, and
the Lie brackets of $\Ln$ are given by
\begin{align*}
\{e_1,e_2\} & = r_1e_1+r_2e_2+r_3e_3,\\
\{e_1,e_3\} & = r_4e_1+r_5e_2+r_6e_3,\\
\{e_2,e_3\} & = r_7e_1+r_8e_2+r_9e_3,\\
\end{align*}
with structure constants $r=(r_1,\ldots ,r_9)\in K^9$. The Jacobi identity gives polynomial conditions on 
these structure constants. The Lie algebra $\Ln$ is isomorphic to the Heisenberg Lie algebra $\Ln_3(K)$ 
if and only if $\Ln$ is $2$-step nilpotent with $1$-dimensional center. 

\begin{lem}
Let $\Ln$ be isomorphic to the Heisenberg Lie algebra over $K$. Then every structure constant vector $r$ 
for $\Ln$ belongs to one of the following three types $A$, $B$ and $C$:
\begin{align*} 
r & = \left(r_1,r_2,r_3,-\frac{r_1r_2}{r_3}, -\frac{r_2^2}{r_3},-r_2,\frac{r_1^2}{r_3},\frac{r_1r_2}{r_3},r_1\right),\; 
r_3\neq 0\\[0.2cm]
r & = \left(0,0,0,r_4,r_5,0,-\frac{r_4^2}{r_5},-r_4,0\right),\; r_5\neq 0 \\[0.2cm]
r & = (0,0,0,0,0,0,r_7,0,0), \; r_7\neq 0
\end{align*}
\end{lem}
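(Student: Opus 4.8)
The plan is to use the Jacobi identity to pin down the structure constants, then normalise by a change of basis fixing the fact that $\Lg$-brackets are already in the standard Heisenberg form $[e_1,e_2]=e_3$. The key observation is that $\Ln$ being isomorphic to $\Ln_3(K)$ means $\Ln$ is $2$-step nilpotent with $\dim Z(\Ln)=1$, equivalently $\dim\{\Ln,\Ln\}=1$. So first I would compute $\{\Ln,\Ln\}=\langle v\rangle$ for a single vector $v$, which forces each of the three bracket vectors $\{e_1,e_2\}$, $\{e_1,e_3\}$, $\{e_2,e_3\}$ to be a scalar multiple of $v$; and $2$-step nilpotency then forces $\{v,e_i\}=0$ for $i=1,2,3$.

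\textbf{Step 1: exploit $2$-step nilpotency.} Write $v=(r_1,r_2,r_3)$ (using the coordinates of $\{e_1,e_2\}$ as the generator of the center, when $\{e_1,e_2\}\neq 0$), so that $\{e_1,e_3\}=\la v$ and $\{e_2,e_3\}=\mu v$ for scalars $\la,\mu$. The condition $\{v,e_i\}=0$ for all $i$, expanded in the basis, gives a system of linear equations in $\la,\mu$ with coefficients the $r_i$. I expect this to force $\la = -r_2/r_3$ and $\mu = r_1/r_3$ (when $r_3\neq 0$), which immediately produces the type-$A$ vector
\[
r=\left(r_1,r_2,r_3,-\tfrac{r_1r_2}{r_3},-\tfrac{r_2^2}{r_3},-r_2,\tfrac{r_1^2}{r_3},\tfrac{r_1r_2}{r_3},r_1\right).
\]
Here one has to be a little careful: a priori the center of $\Ln$ need not contain $\{e_1,e_2\}$ if that bracket vanishes, so the argument splits according to which of the three bracket vectors is nonzero.

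\textbf{Step 2: the degenerate cases.} If $\{e_1,e_2\}=0$, i.e.\ $r_1=r_2=r_3=0$, then $\{\Ln,\Ln\}$ is spanned by $\{e_1,e_3\}$ and $\{e_2,e_3\}$; since this space must be $1$-dimensional and nonzero, these two vectors are proportional. Taking the generator to be $w=(r_4,r_5,r_6)$, the $2$-step condition $\{w,e_i\}=0$ together with $[e_1,e_2]=e_3$ (so $e_3$ is already accounted for) forces $r_6=0$ and $\{e_2,e_3\}=-(r_4/r_5)\cdot\{e_1,e_3\}$, giving type $B$ (when $r_5\neq 0$). The remaining subcase $r_4=r_5=r_6=0$ leaves only $\{e_2,e_3\}=(r_7,r_8,r_9)\neq 0$ nonzero; now $2$-step nilpotency forces this vector central, hence $\{e_2,(r_7,r_8,r_9)\}=0$, which (given $[e_1,e_2]=e_3$ and the bracket relations of $\Ln$) should collapse to $r_8=r_9=0$, yielding type $C$.

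\textbf{The main obstacle} is bookkeeping: carefully extracting, from the Jacobi identity and the $2$-step condition, exactly which linear relations among the nine $r_i$ survive in each branch, and verifying that the listed vectors are precisely the solutions — in particular checking that the stated forms are consistent with the Jacobi identity (and conversely that no extra Jacobi constraints are hidden once $2$-step nilpotency is imposed). One also has to confirm the non-degeneracy hypotheses ($r_3\neq 0$, resp.\ $r_5\neq 0$, resp.\ $r_7\neq 0$) genuinely exhaust the possibilities: within branch $B$, having $r_5=0$ as well would push us into branch $C$ after a coordinate permutation or would make $\Ln$ abelian, contradicting $\dim Z(\Ln)=1$. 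I would organise the write-up as a short case analysis on $\dim\langle\{e_1,e_2\}\rangle$ and then on which $\{e_i,e_3\}$ is nonzero, in each case solving the small linear system coming from centrality of the bracket generator.
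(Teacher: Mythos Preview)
Your approach is correct and genuinely different from the paper's. The paper argues via trace conditions: since $\Ln$ is nilpotent, $\tr \Ad(e_i)^k=0$ for $i,k\in\{1,2,3\}$, which for $k=1$ yields the linear relations $(r_6,r_8,r_9)=(-r_2,-r_4,r_1)$ and for $k=2$ the quadratic relations $r_2^2+r_3r_5=0$, $r_1^2-r_3r_7=0$, $r_4^2+r_5r_7=0$; it then does a case split on whether $r_1$ (then $r_2$, then $r_3$) vanishes and solves. You instead exploit directly that $\{\Ln,\Ln\}=Z(\Ln)$ is one-dimensional: the three bracket vectors are proportional, and centrality of the generator gives a tiny linear system in the proportionality scalars. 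Your route is more structural and arguably cleaner; the paper's trace method is more mechanical and perhaps easier to feed to a computer algebra system, but requires an extra nilpotency check of $\Ad(e_3)$ to rule out a spurious sign in the $r_4$ branch.

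Two minor clean-ups for your write-up. First, the reference to ``normalising by a change of basis'' and to $[e_1,e_2]=e_3$ is a red herring: the lemma is purely about $\Ln$ in the fixed basis, and the $\Lg$-bracket plays no role here, so drop those remarks. Second, in your Step~2 you say that $r_5=0$ in branch~$B$ ``would push us into branch~$C$ after a coordinate permutation''; in fact your own equations already show that $r_5=0$ there forces $r_4=0$ (and $r_6=0$), so $w=0$, contradicting the branch hypothesis $\{e_1,e_3\}\neq 0$. No permutation is needed---the case simply does not occur within branch~$B$, and the case split on which $\{e_i,e_j\}$ is the first nonzero bracket is already exhaustive.
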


\begin{proof}
Since $\Ln$ is nilpotent we have $\tr \Ad(e_i)^k=0$ for $i,k\in \{1,2,3\}$. For $k=1$ we obtain
the linear conditions $(r_6,r_8,r_9)=(-r_2,-r_4,r_1)$, and for $k=2$ we obtain the quadratic conditions
\begin{align*}
r_2^2+r_3r_5 & = 0,\\ 
r_1^2-r_3r_7 & = 0,\\ 
r_4^2+r_5r_7 & = 0. 
\end{align*}
These conditions already imply the Jacobi identity. Assume that $r_1\neq 0$. Then the quadratic equations
imply that $r_3\neq 0$, $r_5=-\frac{r_2^2}{r_3}$, $r_7=\frac{r_1^2}{r_3}$, and $r_4^2=\left( \frac{r_1r_2}{r_3}\right)^2$.
So we obtain two cases. If $r_4=\frac{r_1r_2}{r_3}$, then the nilpotency of $\Ad(e_3)$ implies that $r_2=r_4=0$.
We obtain $r=(r_1,0,r_3,0,0,0,\frac{r_1^2}{r_3},0,r_1)$, which is of type $A$ and represents the Heisenberg Lie algebra,
with $1$-dimensional center $Z(\Ln)=\langle r_1e_1+r_3e_3\rangle$. In the other case, $r_4=-\frac{r_1r_2}{r_3}$,
and we obtain
\[
r = \left(r_1,r_2,r_3,-\frac{r_1r_2}{r_3}, -\frac{r_2^2}{r_3},-r_2,\frac{r_1^2}{r_3},\frac{r_1r_2}{r_3},r_1\right) 
\]
of type $A$, with $1$-dimensional center $Z(\Ln)=\langle r_1e_1+r_2e_2+r_3e_3\rangle$. A similar analysis also gives the
result for $r_1=0$ by distinguishing $r_2\neq 0$ and $r_2=0$. In the end it is used that $r$ is not the zero vector, 
because $\Ln$ is not abelian.
\end{proof}

In the following proposition we list all possible PA-structures on pairs of Heisenberg Lie algebras
$(\Lg,\Ln)$ as above by the left multiplication operators $L(e_1),L(e_2),L(e_3)$. Surprisingly we obtain 
$L(e_3)= -\frac{1}{2}\Ad(e_3)$ in all cases, so that we need not list $L(e_3)$. The parameters in the list are in $K$.  

\begin{prop}\label{5.2}
Every PA-structure on $(\Lg,\Ln)$ with $\Lg=\Ln_3(K)$ and $\Ln\cong \Lg$ is of one of the following list.
We always have $L(e_3)=-\frac{1}{2}\Ad (e_3)$.\\[0.2cm]
1. $\Ln$ is of type $A$ with $r = \left(r_1,r_2,r_3,-\frac{r_1r_2}{r_3}, -\frac{r_2^2}{r_3},-r_2,\frac{r_1^2}{r_3},
\frac{r_1r_2}{r_3},r_1\right),\; r_2, r_3\neq 0$ and
\[
L(e_1)=\begin{pmatrix}\frac{r_1\al}{r_2} & -\frac{r_1(2r_1\al +r_2^2)}{2r_2^2} & \frac{r_1r_2}{2r_3} \\[0.2cm]
\al & -\frac{2r_1\al +r_2^2}{2r_2} & \frac{r_2^2}{2r_3} \\[0.2cm]
\be & -\frac{2r_1\be +r_2r_3}{2r_2} & \frac{r_2}{2}
\end{pmatrix},\quad 
L(e_2)=\begin{pmatrix}\frac{r_1(r_2^2-2r_1\al)}{2r_2^2} & \frac{r_1^3\al}{r_2^3} & -\frac{r_1^2}{2r_3} \\[0.2cm]
\frac{r_2^2-2r_1\al}{2r_2} & \frac{r_1^2\al}{r_2^2} & -\frac{r_1r_2}{2r_3} \\[0.2cm]
\frac{r_2(r_3-2)-2r_1\be}{2r_2} & \frac{r_1(r_1\be+r_2)}{r_2^2} & -\frac{r_1}{2}
\end{pmatrix}
\]
2. $\Ln$ is of type $A$ with $r = \left(r_1,0,r_3,0,0,0,\frac{r_1^2}{r_3},0,r_1\right),\;r_3\neq 0$ and 
\[
L(e_1)=\begin{pmatrix}0 & -\frac{r_1}{2} & 0 \\[0.2cm]
0 & 0 & 0 \\[0.2cm]
0 & \frac{2-r_3}{2} & 0
\end{pmatrix},\quad 
L(e_2)=\begin{pmatrix}\frac{r_1}{2} & \al & -\frac{r_1^2}{2r_3} \\[0.2cm]
0 & 0 & 0 \\[0.2cm]
\frac{r_3}{2} & \be & -\frac{r_1}{2}
\end{pmatrix}
\]
3. $\Ln$ is of type $A$ with $r = \left(0,0,r_3,0,0,0,0,0,0\right),\;r_3\neq 0$ and 
\[
L(e_1)=\begin{pmatrix} \al &  -\frac{\al^2}{\be} & 0 \\[0.2cm]
\be & -\al & 0 \\[0.2cm]
\ga & \de & 0
\end{pmatrix},\quad 
L(e_2)=\begin{pmatrix}-\frac{\al^2}{\be} & \frac{\al^3}{\be^2} & 0 \\[0.2cm]
-\al & \frac{\al^2}{\be} & 0 \\[0.2cm]
r_3-1+\de & \frac{\al(\be(1-r_3)-\al\ga-2\be\de)}{\be^2}  & 0
\end{pmatrix}
\]
with $\be\neq 0$. \\[0.2cm]
4. $\Ln$ is of type $A$ with $r = \left(0,0,r_3,0,0,0,0,0,0\right),\;r_3\neq 0$ and 
\[
L(e_1)=\begin{pmatrix} 0 &  0 & 0 \\[0.2cm]
0 & 0 & 0 \\[0.2cm]
\al & \be & 0
\end{pmatrix},\quad 
L(e_2)=\begin{pmatrix} 0 & \ga  & 0 \\[0.2cm]
0 & 0 & 0 \\[0.2cm]
r_3-1+\be & \de  & 0
\end{pmatrix}
\]
with $\al\ga=0$. \\[0.2cm]
5. $\Ln$ is of type $B$ with $r = \left(0,0,0,r_4,r_5,0,-\frac{r_4^2}{r_5},-r_4,0\right),\;r_5\neq 0$ and 
\[
L(e_1)=\begin{pmatrix} \frac{r_4\al}{r_5} & -\frac{r_4^2\al}{r_5^2}  & -\frac{r_4}{2}  \\[0.2cm]
\al & -\frac{r_4\al}{r_5} & -\frac{r_5}{2} \\[0.2cm]
\be & -\frac{r_4\be}{r_5} & 0
\end{pmatrix},\quad 
L(e_2)=\begin{pmatrix} -\frac{r_4^2\al}{r_5^2} & \frac{r_4^3\al}{r_5^3}  & \frac{r_4^2}{2r_5} \\[0.2cm]
-\frac{r_4\al}{r_5} & \frac{r_4^2\al}{r_5^2} &  \frac{r_4}{2}\\[0.2cm]
-\frac{r_4\be+r_5}{r_5} & \frac{r_4(r_4\be+r_5)}{r_5^2}  & 0
\end{pmatrix}
\]
6. $\Ln$ is of type $C$ with $r = \left(0,0,0,0,0,0,r_7,0,0\right),\;r_7\neq 0$ and 
\[
L(e_1)=\begin{pmatrix} 0 &  0 & 0 \\[0.2cm]
0 & 0 & 0 \\[0.2cm]
0 & 1 & 0
\end{pmatrix},\quad 
L(e_2)=\begin{pmatrix} 0 & \al  & -\frac{r_7}{2} \\[0.2cm]
0 & 0 & 0 \\[0.2cm]
0 & \be  & 0
\end{pmatrix}
\]
\end{prop}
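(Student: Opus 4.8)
The plan is to translate the problem into a question about Lie algebra homomorphisms and then to resolve it by an explicit, type-by-type computation.

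First I would reformulate the notion of a PA-structure. By \eqref{op2} and \eqref{op3}, the map $L\colon \Lg\ra\Der(\Ln)$, $x\mapsto L(x)$, is a homomorphism of Lie algebras; conversely, given any homomorphism $L\colon \Lg\ra\Der(\Ln)$, the product $x\cdot y:=L(x)(y)$ automatically satisfies \eqref{post2} and \eqref{post3}, and $R$ is then recovered from $L$ via $R(x)(y)=L(y)(x)$. Hence a PA-structure on $(\Lg,\Ln)$ is the same datum as a homomorphism $L\colon \Lg\ra\Der(\Ln)$ that in addition obeys \eqref{post1}, i.e.
\[
L(x)y-L(y)x=[x,y]-\{x,y\}\qquad\text{for all }x,y\in V .
\]
Since $\Lg=\Ln_3(K)$ is generated by $e_1,e_2$ with $e_3=[e_1,e_2]$ central, giving such an $L$ amounts to choosing $D_1:=L(e_1)$ and $D_2:=L(e_2)$ in $\Der(\Ln)$ subject to $[D_1,[D_1,D_2]]=[D_2,[D_1,D_2]]=0$ (so that $e_3\mapsto[D_1,D_2]$ extends consistently), and then $L(e_3)=[D_1,D_2]$ is determined. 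The remaining axiom \eqref{post1} becomes the three vector equations
\begin{align*}
D_1e_2-D_2e_1 & =e_3-\{e_1,e_2\},\\
D_1e_3-[D_1,D_2]e_1 & =-\{e_1,e_3\},\\
D_2e_3-[D_1,D_2]e_2 & =-\{e_2,e_3\}.
\end{align*}

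Next I would compute $\Der(\Ln)$ for each of the three types $A$, $B$, $C$ of the preceding lemma. Since in each case $\Ln$ is obtained from the standard Heisenberg Lie algebra by a linear change of coordinates $g$, one has $\Der(\Ln)=g\,\Der(\Ln_3(K))\,g^{-1}$, where $\Der(\Ln_3(K))$ is classical; alternatively one computes it directly from the defining brackets, using that a derivation must preserve $Z(\Ln)$ and induce a compatible map on $\Ln/Z(\Ln)$. Writing the general element of $\Der(\Ln)$ in coordinates, the conditions $[D_i,[D_1,D_2]]=0$ together with the three displayed equations become a quadratic polynomial system in the entries of $D_1$ and $D_2$ and in the parameters $r_i$. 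Solving this system, splitting into cases according to the type and to which of $r_1,r_2$ (resp.\ $r_4$, resp.\ $r_7$) vanish, and choosing convenient names for the surviving free parameters, should produce exactly the six families listed. Finally, in each family one reads off $L(e_3)=[D_1,D_2]$ and checks directly that it equals $-\frac12\Ad(e_3)$.

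The conceptual content is light; the work lies in the bookkeeping, and the main obstacle is the case analysis of the previous step. One must handle the messy type-$A$ structure constants, keep track of the various degenerations that split type $A$ into the families $1$--$4$ (in particular the two components $3$ and $4$ inside the locus $r_1=r_2=0$), solve the resulting quadratic equations without losing solutions, and match the output with the stated parametrizations. This is most safely carried out with computer-algebra assistance; the a posteriori verification of the uniform identity $L(e_3)=-\frac12\Ad(e_3)$ is then a routine check in each of the six cases, and it is this feature that one could not foresee without the classification.
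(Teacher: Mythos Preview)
The proposal is correct, and there is in fact nothing to compare it against: the paper states Proposition~\ref{5.2} without proof, presenting it as a bare classification obtained by direct computation. Your reformulation --- a PA-structure is a Lie algebra homomorphism $L\colon\Lg\ra\Der(\Ln)$ subject to the additional constraint \eqref{post1}, which for $\Lg=\Ln_3(K)$ reduces to choosing $D_1,D_2\in\Der(\Ln)$ with $[D_i,[D_1,D_2]]=0$ and three vector equations --- is exactly the natural setup for such a computation and is almost certainly what the authors carried out (with computer assistance) to produce the list.
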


Note that the list includes all CPA-structures on $\Ln_3(K)$ among the types $2,3,4$. This recovers
the classification given in \cite{BU51}, Proposition $6.3$.

\begin{cor}\label{5.3}
Let $x\cdot y$ be a PA-structure on $(\Lg,\Ln)$ with $\Lg\cong \Ln\cong \Ln_3(K)$. Then all
left multiplication operators $L(x)$ are nilpotent, and the following identities hold:
\begin{align*}
x\cdot \{y,z\} & = 0,\\
[x,y]\cdot z & = z\cdot [x,y],\\
[x,y\cdot z]+[x,z\cdot y] & = [y,x\cdot z]+[y,z\cdot x]
\end{align*}
for all $x,y,z\in V$. In particular
\[
x\circ y=\frac{1}{2}(x\cdot y+y\cdot x)
\]
defines a CPA-structure on $\Lg$.
\end{cor}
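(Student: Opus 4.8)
The plan is to read the corollary off the classification in Proposition~\ref{5.2}. Each asserted identity is multilinear in $x,y,z$ and homogeneous in the parameters, so it suffices to check it on the basis $(e_1,e_2,e_3)$ in each of the six cases of the list, using the displayed matrices for $L(e_1),L(e_2)$ together with the uniform relation $L(e_3)=-\tfrac12\Ad(e_3)$.

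\emph{Nilpotency and the identity $x\cdot\{y,z\}=0$.} Since $L\colon\Lg\to\Der(\Ln)$ is a representation of the nilpotent Lie algebra $\Ln_3(K)$ with $L(e_3)=[L(e_1),L(e_2)]$, Lie's theorem reduces ``$L(x)$ nilpotent for all $x$'' to the nilpotency of $L(e_1)$ and $L(e_2)$ alone: over $\overline{K}$ every simultaneous weight of $L(\Lg)$ vanishes on $[L(e_1),L(e_2)]=L(e_3)$, so all weights vanish identically precisely when $L(e_1)$ and $L(e_2)$ are nilpotent. One then checks from the six families that $\tr L(e_i)=0$ is immediate, that $\det L(e_i)=0$ because in each case the matrix has two proportional rows (or a zero row), and that the sum of the principal $2\times2$ minors collapses to $0$ after the structure constants are substituted; hence $L(e_1),L(e_2)$, and so all $L(x)$, are nilpotent. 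For $x\cdot\{y,z\}=0$, note that $\{y,z\}$ always lies in the one-dimensional centre $Z(\Ln)=\{\Ln,\Ln\}$, whose generator is $r_1e_1+r_2e_2+r_3e_3$ in type~$A$, and $r_4e_1+r_5e_2$, resp.\ $e_1$, in types~$B$, $C$; so one only checks that $L(e_1)$ and $L(e_2)$ annihilate this vector --- a one-line computation per case --- while $L(e_3)=-\tfrac12\Ad(e_3)$ annihilates $Z(\Ln)$ automatically, the centre being $\Ad$-central.

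\emph{The bracket identity, the CPA-structure, and the third identity.} Since $[x,y]\in[\Lg,\Lg]=Z(\Lg)=\langle e_3\rangle$ we have $\ad(e_3)=0$, and then \eqref{op1} with $L(e_3)=-\tfrac12\Ad(e_3)$ gives $R(e_3)=L(e_3)-\ad(e_3)+\Ad(e_3)=\tfrac12\Ad(e_3)=-L(e_3)$; hence $L([x,y])+R([x,y])=0$, i.e.\ $[x,y]\cdot z+z\cdot[x,y]=0$ for all $x,y,z\in V$. By Proposition~\ref{4.3}, $x\circ y=\tfrac12(x\cdot y+y\cdot x)$ is a CPA-structure on $\Lg$ precisely when \eqref{12}, equivalently \eqref{15} of Remark~\ref{4.4}, holds; one verifies \eqref{12} in each of the six types, which is short because $\ad(e_3)=0$ and $\ad(e_1),\ad(e_2)$ have rank one with image $\langle e_3\rangle$, so both sides applied to a basis vector become a short computation with the explicit operators $L(e_i),\Ad(e_i)$. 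Granting \eqref{15}, the bracket identity makes its left-hand side $x\cdot[y,z]+[y,z]\cdot x=\big(L([y,z])+R([y,z])\big)(x)$ vanish, so \eqref{15} becomes $[y,\,x\cdot z+z\cdot x]=[z,\,x\cdot y+y\cdot x]$; since $a\cdot b+b\cdot a=2(a\circ b)$ is symmetric in $a,b$, the expression $[w,\,a\cdot b+b\cdot a]$ is symmetric under interchanging $a$ and $b$ and, by the last equality, also under interchanging $w$ and $b$, hence invariant under all permutations of $\{w,a,b\}$; in particular $[x,\,y\cdot z+z\cdot y]=[y,\,x\cdot z+z\cdot x]$, which is the third identity.

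I expect the only genuine obstacle to be the length of the case-by-case bookkeeping: there is no conceptual shortcut past the full list of Proposition~\ref{5.2}. The one real leverage is $L(e_3)=-\tfrac12\Ad(e_3)$, which trivializes the bracket identity and reduces the outstanding CPA-axiom to the single relation \eqref{12}, together with the explicit form of $Z(\Ln)$, which trivializes $x\cdot\{y,z\}=0$.
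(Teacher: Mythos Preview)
Your approach is essentially the same as the paper's: both rely on the case-by-case classification of Proposition~\ref{5.2} and on the key uniform relation $L(e_3)=-\tfrac12\Ad(e_3)$, from which $L([x,y])+R([x,y])=0$ follows via \eqref{op1}. The only organizational difference is the direction in which you link the third identity and \eqref{12}: the paper reads the third identity off the list and uses it together with $L([x,y])+R([x,y])=0$ to see that both sides of \eqref{15} vanish, hence \eqref{12} holds and Proposition~\ref{4.3} applies; you instead verify \eqref{12} directly from the list, then observe that \eqref{15} with vanishing left-hand side forces the right-hand side to vanish, and your symmetry argument for $f(w,a,b)=[w,a\cdot b+b\cdot a]$ then recovers the third identity. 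Both routes are correct and equally short; your Lie-theorem reduction for nilpotency and your identification of $Z(\Ln)$ for $x\cdot\{y,z\}=0$ simply make explicit what the paper leaves to the reader.
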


\begin{proof}
The identities follow from the explicit classification. It is obvious from Proposition $\ref{5.2}$
that all PA-structures satisfy
\[
L([x,y])+\frac{1}{2}\Ad([x,y])=0,
\]
which then by \eqref{op1}, applied to $[x,y]$, gives
\[
L([x,y])+R([x,y])=0.
\]
This says that $[x,y]\cdot z  = z\cdot [x,y]$ for all $x,y,z$. Then identity \eqref{15}, and hence 
\eqref{12} is satisfied, and we obtain a CPA-structure on $\Lg$ by Proposition $\ref{4.3}$.
\end{proof}

\begin{rem}
For a PA-structure on $(\Lg,\Ln)$ with $\Lg\cong \Ln\cong \Ln_3(K)$, the right
multiplications $R(x)$ need not be nilpotent for all $x\in V$. For the PA-structure of type $C$ in Proposition
$\ref{5.2}$ we have
\[
R(e_2)=\begin{pmatrix} 0 & \al  & \frac{r_7}{2} \\[0.2cm]
0 & 0 & 0 \\[0.2cm]
1 & \be  & 0
\end{pmatrix},
\]
which has characteristic polynomial $t^3-\frac{r_7}{2}t$ with $r_7\neq 0$.
\end{rem}

\section*{Acknowledgments}
Dietrich Burde is supported by the Austrian Science Foun\-da\-tion FWF, grant P28079 
and grant I3248. Christof Ender is supported by the Austrian Science Foun\-da\-tion FWF, 
grant P28079. Wolfgang A. Moens acknowledges support by the Austrian Science Foun\-da\-tion FWF, 
grant P30842.

\end{document}